\newtheorem{theorem}{Theorem}
\newtheorem{corollary}[theorem]{Corollary}
\newtheorem{lemma}[theorem]{Lemma}
\newtheorem{proposition}[theorem]{Proposition}
\newtheorem{remark}[theorem]{Remark}
\newenvironment{proof}[1][Proof]{\noindent\textbf{#1.} }{\ \rule{0.5em}{0.5em}}
\begin{document}

\title{Local behaviour of first passage probabilities}
\author{ R. A. Doney}
\date{}
\maketitle

\begin{abstract}
Suppose that $S$ is an asymptotically stable random walk with norming
sequence $c_{n}$ and that $T_{x}$ is the time that $S$ first enters $%
(x,\infty ),$ where $x\geq 0.$ The asymptotic behaviour of $P(T_{0}=n)$ has
been described in a recent paper of Vatutin and Wachtel \cite{vw}, and here
we build on that result to give three estimates for $P(T_{x}=n),$ which hold
uniformly as $n\rightarrow \infty $ in the regions $x=o(c_{n}),$ $%
x=O(c_{n}), $ and $x/c_{n}\rightarrow \infty ,$ respectively.
\end{abstract}

\section{Introduction}

Supppose $S$ is a 1-dimensional random walk and for $x\geq 0$ let $T_{x}$ be
the first exit time of $(-\infty ,x],$ and write $T$ for $T_{0}$: thus $T$
is also the first strict ascending ladder time in $S.$ Results about the
tail behaviour of $T_{x}$ are known in three different regimes. Firstly,
with $U$ denoting the renewal function in the strict increasing ladder
process of $S,$ and with $x$ denoting any \textbf{fixed }continuity point of 
$U$\textbf{,} for any $\rho \in (0,1)$ the following statements are
equivalent:%
\begin{equation}
P(S_{n}>0)\rightarrow \rho \text{ as }n\rightarrow \infty :  \label{0.3}
\end{equation}%
\begin{equation}
P(T_{x}>n)\backsim U(x)n^{-\rho }L(n)\text{ as }n\rightarrow \infty .
\label{0.4}
\end{equation}%
(Here $L$ denotes a function which is slowly varying (s.v.) at $\infty ;$
its asymptotic behaviour is determined by the sequence $(\rho _{n,}n\geq 1),$
where $\rho _{n}=P(S_{n}>0),$ see e.g. \cite{rad3}.)\ \ The case $x=0$ of (%
\ref{0.4}) asserts that $T$ is in the domain of attraction of a positive
stable law of index $\rho :$ we write this as $T\in D(\rho ,1).$

In particular, (\ref{0.3}) and (\ref{0.4}) hold in the situation that $S$ is
in the domain of attraction of a strictly stable law without centreing (we
write $S\in D(\alpha ,\rho ),$ where $\alpha \in (0,2]$ is the index and $%
\rho \in (0,1)$ is the positivity parameter). In this asymptotically stable
case, if $c_{n}$ is such that $(S_{[nt]}/c_{n},t\geq 0)\overset{d}{%
\rightarrow }(Y_{t},t\geq 0),$ we can deduce from the functional central
limit theorem that, when $\ x_{n}:=x/c_{n}$ is bounded away from zero and
infinity,%
\begin{equation}
(T_{x}>n)\backsim P(\sigma _{x_{n}}>1)=\int_{1}^{\infty }h_{x_{n}}(t)dt,
\label{0.1}
\end{equation}%
where $h_{a}(\cdot )$ is the density function of $\sigma _{a}$, the first
passage time of the limiting stable process $Y$ over $a.$ Finally, if $%
\alpha \rho <1,$ so that $\overline{F},$ the right-hand tail of the
distribution function of $S_{1},$ is regularly varying with index $-\alpha ,$
with $\alpha \in (0,2),$ (we write this as $\overline{F}\in RV(-\alpha )$),
and $x/c_{n}\rightarrow \infty ,$ then it is known that%
\begin{equation}
P(T_{x}\leq n)=P(\max_{r\leq n}S_{r}>x)\backsim n\overline{F}(x).
\label{0.2}
\end{equation}

In this paper we will prove that in this asymptotically stable case \textbf{%
local} uniform versions of (\ref{0.4}), (\ref{0.1}), and (\ref{0.2}) hold in
the respective scenarios%
\begin{equation*}
\begin{array}{cc}
A: & x/c_{n}\rightarrow 0, \\ 
B: & x/c_{n}\text{ is bounded away from }0\text{ and }\infty , \\ 
C: & x/c_{n}\rightarrow \infty .%
\end{array}%
\end{equation*}

The inspiration for this programme comes from a recent paper by Vatutin and
Wachtel \cite{vw}, who show that in almost all cases that $S\in D(\alpha
,\rho )$ the following local estimate holds:%
\begin{equation}
P(T=n)\backsim \rho n^{-\rho -1}L(n)\text{ as }n\rightarrow \infty \text{ .}
\label{a}
\end{equation}%
(They actually show that (\ref{a}) can only fail if $S$ lives on a
non-centred lattice, when a modified version holds: we do not treat this
case.) The statement (\ref{a}) is a local version of the special case $x=0$
of (\ref{0.4}), and we mention at this point that their proof is quite
different according as $\alpha \rho <1$ or $\alpha \rho =1$. We also mention
that prior to \cite{vw}, the asymptotic behaviour of $P(T=n)$ was apparently
only known in the case of attraction to the Normal distribution: see \cite%
{epp} and \cite{ad2}. However the asymptotic behaviour of the ratio $%
P(T_{x}=n)/P(T=n)$ for fixed $x$ is known for strongly aperiodic recurrent
random walk on the integers, (see Theorem 7 of \cite{hk}), so our focus is
mainly on the case that $x\rightarrow \infty .$

Our first result shows that the obvious local version of (\ref{0.4}), viz%
\begin{equation}
P(T_{x}=n)\backsim \rho U(x)n^{-\rho -1}L(n)\text{ as }n\rightarrow \infty ,%
\text{ }  \label{0.5}
\end{equation}%
holds uniformly for $x\geq 0$ in case $A.$

In case $B,$ our result is a uniform local version of (\ref{0.1}), which is
valid in all cases.

Finally in case $C,$ we prove a uniform local version of (\ref{0.2}), but
this requires the additional assumption that $\alpha \rho <1,$ so that $%
\overline{F}\in RV(-\alpha ),$ and also a local version of this assumption$.$

To the best of our knowledge, these results are new for non-constant $x,$
except for the case of finite variance, where similar results were
established in Eppel \cite{epp}.

Our method of proof in cases $A$ and $B$ relies crucially on several
different local estimates of the distribution of $S_{n}$ conditional on $%
T_{x}>n,$ which extend results for the case $x=0$ from \cite{vw}, and in
case $C$ we use a conditional local limit theorem from \cite{ej}.

We state our notation, assumptions and results in detail in the next
section, then give some preliminary results in section 3, prove the
above-mentioned estimates, which may be of independent interest, in section
4, give a full proof of our main results in the lattice case in section 5,
and sketch the proof in the non-lattice case in the final section.

\section{Results}

\textbf{Notation }In what follows the phrase "$S$ is an a.s.r.w.",
(asymptotically stable random walk) will have the following meaning.

\begin{itemize}
\item $S=(S_{n},n\geq 0)$ is a 1-dimensional random walk with $S_{0}=0$ and $%
S_{n}=\sum_{1}^{n}X_{r}$ for $n\geq 1$ where $X_{1},X_{2},\cdots $ are
i.i.d. with $F(x)=P(X_{1}\leq x):$

\item $S$ is either non-lattice, or it takes values on the integers and is
aperiodic:

\item there is a monotone increasing continuous function $c(t)$ such that
the process defined by $X_{t}^{(n)}=S_{[nt]}/c_{n}$ converges weakly as $%
n\rightarrow \infty $ to a stable process $Y=(Y_{t},t\geq 0).$

\item the process $Y$ has index $\alpha \in (0,2],$ and $\rho
:=P(Y_{1}>0)\in (0,1).$
\end{itemize}

\begin{remark}
The case $\alpha \rho =1,\alpha \in (1,2]$ is the spectrally negative case,
and we will sometimes need to treat this case separately. If $\alpha \rho <1$
then $\alpha <2$ and the L\'{e}vy measure $\Pi $ of $Y$ has a density equal
to $c_{+}x^{-\alpha -1}$ on $(0,\infty )$ with $c_{+}>0,$ and then we can
also assume that the norming sequence satisfies%
\begin{equation}
n\overline{F}(c_{n})\rightarrow 1\text{ as }n\rightarrow \infty .  \label{z}
\end{equation}%
But if $\alpha \rho =1$ we will have 
\begin{equation}
n\overline{F}(c_{n})\rightarrow 0\text{ as }n\rightarrow \infty .  \label{w}
\end{equation}
\end{remark}

Here are our main results, where we recall that $h_{y}(\cdot )$ denotes the
density function of the passage time over level $y>0$ of the process $Y.$ We
will also adopt the convention that both $x$ and $\Delta $ are restricted to
the integers in the lattice case.

\begin{theorem}
\label{one} Assume that $S$ is an asrw. Then

(A) uniformly for $x$ such that $x/c_{n}\rightarrow 0,$ 
\begin{equation}
P(T_{x}=n)\text{ }\backsim U(x)P(T=n)\backsim \rho U(x)n^{-\rho -1}L(n)\text{
as }n\rightarrow \infty :  \label{t1}
\end{equation}

(B) uniformly in $x_{n}:=x/c(n)\in \lbrack D^{-1},D],$ for any $D>1,$%
\begin{equation}
P(T_{x}=n)\backsim n^{-1}h_{x_{n}}(1)\text{ as }n\rightarrow \infty .
\label{t2}
\end{equation}%
If, in addition, $\alpha \rho <1,$ and 
\begin{equation}
f_{x}^{\Delta }:=P(S_{1}\in \lbrack x,x+\Delta ))\text{ is regularly varying
as }x\rightarrow \infty ,  \label{add}
\end{equation}%
then

(C) uniformly for $x$ such that $x/c_{n}\rightarrow \infty ,$%
\begin{equation}
P(T_{x}=n)\backsim \overline{F}(x)\text{ as }n\rightarrow \infty .
\label{t3}
\end{equation}
\end{theorem}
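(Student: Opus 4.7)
The plan is to prove the three assertions by combining, in each regime, an appropriate conditional local estimate of $S_{n-1}$ on $\{T_x > n-1\}$ with the known asymptotics for $P(T=n)$, $h_{x_n}$, or $\overline{F}(x)$ respectively. For (B) and (C) the estimate is plugged directly into the one-step Markov identity
\[
P(T_x = n) \;=\; \sum_y P\bigl(T_x > n-1,\, S_{n-1} = y\bigr)\,\overline{F}(x - y);
\]
for (A) a ladder-renewal decomposition is more convenient.

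For (A), I would write
\[
P(T_x = n) \;=\; \sum_{m=0}^{n-1}\int_{[0,x]} V(m, dy)\, P\bigl(T = n-m,\, S_T > x-y\bigr),
\]
where $V(m,\cdot)$ is the joint renewal measure of the strict ascending ladder process $(\tau_k, H_k)$ and $\sum_m V(m, dy) = U(dy)$. Since $x/c_n \to 0$ forces $x - y = o(c_n)$ uniformly for $y\in[0,x]$, the Vatutin--Wachtel local asymptotic (\ref{a}), regular variation of $n \mapsto P(T=n)$ (which yields $P(T=n-m)\sim P(T=n)$ for $m=o(n)$), and a uniform estimate $P(S_T > z\mid T=k)\to 1$ as $z/c_k\to 0$ together give $P(T=n-m,\,S_T>x-y)\sim P(T=n)$ in the dominant range $m = o(n)$. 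Integrating against $V$ then produces $U(x)P(T=n)$. The technical bottleneck is controlling the leftover contributions from $m$ comparable to $n$ and from $y$ near $x$; this requires a uniform upper bound $P(T = k,\, S_T > z) \le C\,P(T=k)$ and companion estimates on $V(m,[0,x])$ for large $m$, exactly the Section~4 estimates flagged in the introduction and where I expect the main difficulty.

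For (B), substitute into the Markov identity a uniform conditional local limit theorem
\[
c_n\, P\bigl(S_{n-1}=y \mid T_x > n-1\bigr) \longrightarrow g\bigl(y/c_n \mid x_n\bigr)
\]
for $x_n \in [D^{-1},D]$, where $g(\cdot \mid a)$ is the density of $Y_1$ conditional on $\sigma_a > 1$. Combined with $P(T_x > n-1)\to P(\sigma_{x_n}>1)$, the sum becomes a Riemann-sum approximation to an integral against $g(\cdot\mid x_n)$, which is identified with $n^{-1}h_{x_n}(1)$ via the L\'evy-system identity $h_a(1)=P(\sigma_a>1)\int g(z\mid a)\,\Pi\bigl((a-z,\infty)\bigr)\,dz$ together with the input $n\overline{F}(c_n u)\to\Pi((u,\infty))$ from (\ref{z}). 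The spectrally negative sub-case $\alpha\rho=1$, $\alpha\in(1,2]$, where $\overline{F}$ is not regularly varying and (\ref{w}) applies instead, must be treated separately; there the identification is extracted directly from the functional limit theorem and smoothness of $h_a$.

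For (C), the conditional local limit theorem of \cite{ej} in the regime $x/c_n\to\infty$ asserts that $S_{n-1}/c_n$ remains asymptotically distributed as $Y_1$ even given $\{T_x>n-1\}$, and that $P(T_x>n-1)\to 1$. Feeding this into the Markov identity gives $P(T_x=n)\approx E\bigl[\overline{F}(x-S_{n-1})\bigr]$, and since $S_{n-1}$ is of order $c_n=o(x)$, the hypothesis $\overline{F}\in RV(-\alpha)$ delivers $\overline{F}(x-y)\sim\overline{F}(x)$ uniformly for $y=O(c_n)$, hence $P(T_x=n)\sim\overline{F}(x)$. The subtlety here is uniform control of the contributions from $y$ within $o(c_n)$ of $x$, where the one-step tail can deviate from $\overline{F}(x)$; this is precisely what the local regular variation hypothesis (\ref{add}) is designed to supply, by giving asymptotic regularity of $P(X_1\in[z,z+\Delta))$ for large $z$ and thereby dominating the small-gap terms.
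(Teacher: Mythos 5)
Your plan agrees with the paper at a structural level, and in fact more closely than you may realise.  For (B) and (C) you feed a conditional local estimate for $S_{n-1}$ on $\{T_x>n-1\}$ into the one-step identity, and identify the normalisation via a stable-process identity; this is exactly the paper's route, with your ``L\'evy-system identity'' being (\ref{riv}) (and (\ref{lc}) in the spectrally negative case).  For (A) your ladder decomposition over the last ascending ladder epoch with height $\le x$ is, after re-indexing, \emph{the same triple sum} as the paper's (\ref{i}) composed with (\ref{main}): the time/position of the running maximum of $S$ on $\{T_x>n\}$ \emph{is} the last ladder epoch/height, and $P(T=k,\,S_T>w)=\sum_{b\ge 0}g^-(k-1,b)\overline{F}(w+b)$ by duality, so your formula and the paper's expand to the identical expression $\sum_{m,a\le x,b\ge 0}g(m,a)g^-(n-m,b)\overline{F}(x-a+b)$.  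So this is a reorganisation of the same calculation, not a genuinely different attack; the technical content you flag (uniform bounds and matching lower bounds for the bivariate ladder renewal measures) is precisely the content of the paper's Proposition~\ref{four} and Lemma~\ref{nine}.

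There is one substantive misattribution in (C).  You assign to the hypothesis (\ref{add}) the job of controlling the one-step tail $\overline{F}(x-y)$ near $y=x$; but $\overline{F}\le 1$ and needs no hypothesis.  What must in fact be bounded is the \emph{conditional local} probability $P(S_n=x-w,\,T_x>n)$ for $w\le \gamma c_n$ --- the chance that the walk, constrained to stay below $x$, is within $O(c_n)$ of $x$ at time $n$.  The naive unconditional estimate $P(S_n>x-\gamma c_n)\approx n\overline{F}(x)$ is too weak: multiplying by $\overline{F}(x-y)\le 1$ does not automatically give $o(\overline{F}(x))$.  The paper handles this region ($P^{(4)}$) through the ladder representation (\ref{main}) and the uniform bound (\ref{bound}), $\sup_{x>Kc_n} x\,g(n,x)P(\tau>n)/\overline{F}(x)<\infty$, which yields $P^{(4)}\le C\gamma^{1-\alpha\rho}(c_n/x)\overline{F}(x)=o(\overline{F}(x))$.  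It is in deriving (\ref{bound}) from the conditional local limit theorem (\ref{5.3}) of Proposition~\ref{thirteen} that (\ref{add}) is used --- not in controlling $\overline{F}$.  Your sketch, as written, does not rule out this term being of the same order as $\overline{F}(x)$.

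Two further points of detail.  In (B), the spectrally negative sub-case is more than a smoothness issue: the concrete ingredient is the identity $p(x)=k_9 h_x(1)$ of (\ref{lc}) together with the existence (noted in \cite{vw}) of $\delta_n\downarrow 0$ with $\delta_n c_n\to\infty$ and $n\overline{F}(\delta_n c_n)\to 0$; your phrase ``extracted directly from the functional limit theorem and smoothness of $h_a$'' does not capture this.  And in both (A) and (B) the paper does not compute the constant directly from the integral representation; it pins it down by self-consistency, taking $x=0$ in (A) and comparing with the integrated statement (\ref{0.1}) in (B).  That is a small but useful economy you would want in a full write-up.
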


From this we get immediately a strengthening of (\ref{0.4}).

\begin{corollary}
If $S$ is an a.s.r.w. the estimate 
\begin{equation*}
P(T_{x}>n)\backsim U(x)n^{-\rho }L(n)\text{ as }n\rightarrow \infty
\end{equation*}%
holds uniformly as $x/c_{n}\rightarrow 0.$
\end{corollary}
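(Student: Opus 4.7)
The plan is to integrate the pointwise local asymptotic of Theorem~\ref{one}(A) into a tail asymptotic by summation, controlled by Karamata's theorem on regularly varying functions. Concretely, I would write $P(T_x>n)=\sum_{k>n}P(T_x=k)$ and sandwich each term using part~(A).

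The key point is that the uniformity stated in part~(A) transfers automatically to every index $k\geq n$. Since the norming function $c$ is monotone increasing, $x/c_n\to 0$ forces $x/c_k\leq x/c_n\to 0$ uniformly for $k\geq n$. Hence for any $\epsilon>0$, once $n$ is large enough one has
\[
(1-\epsilon)\rho U(x)k^{-\rho-1}L(k)\ \leq\ P(T_x=k)\ \leq\ (1+\epsilon)\rho U(x)k^{-\rho-1}L(k)
\]
simultaneously for all $k\geq n$ and all $x$ with $x/c_n$ small. Summing these bounds reduces the problem to
\[
\sum_{k>n}k^{-\rho-1}L(k)\ \sim\ \rho^{-1}n^{-\rho}L(n)\qquad(n\to\infty),
\]
which is immediate from Karamata's theorem applied to a summand regularly varying of index $-(\rho+1)<-1$. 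Combining the two estimates gives $P(T_x>n)\in[(1-\epsilon)(1+o(1)),(1+\epsilon)(1+o(1))]\,U(x)n^{-\rho}L(n)$ uniformly in $x$ with $x/c_n\to 0$, and letting $\epsilon\downarrow 0$ yields the corollary.

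There is no genuine obstacle. The only point requiring care is the propagation of uniformity from $k=n$ to all $k\geq n$, which, as noted, follows from monotonicity of $c$; after that, the argument is a routine tail-sum. Note that absolute convergence of the sum is not in question because the index $-(\rho+1)$ of the regularly varying majorant is strictly less than $-1$.
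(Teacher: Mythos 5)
Your proof is correct and is exactly the argument the paper has in mind: the paper gives no explicit proof, saying only that the Corollary follows ``immediately'' from Theorem~\ref{one}(A), and the tail-sum plus Karamata argument is the obvious way to realize that. The one non-trivial point — that the uniformity propagates from $n$ to all $k\geq n$ because $c$ is monotone increasing — you have identified and handled correctly.
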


\begin{remark}
In view of \ (\ref{0.1}) the result (\ref{t2}) might seem obvious. However (%
\ref{0.1}) could also be written as $P(\max_{r\leq n}S_{r}\leq x)\backsim
\int_{0}^{x_{n}}m(y)dy,$ where $m$ denotes the density function of $%
\sup_{t\leq 1}Y_{s},$ and in the recent paper \cite{w}, Wachtel has shown
that the obvious local version of this is only valid under an additional
hypothesis.
\end{remark}

\begin{remark}
The asymptotic behaviour of $h_{x}(1)$ has been determined in \cite{ds}, and
is given by%
\begin{equation*}
h_{x}(1)\backsim k_{1}x^{\alpha \rho }\text{ as }x\downarrow 0,\text{ }%
h_{x}(1)\backsim k_{2}x^{-\alpha }\text{ as }x\rightarrow \infty .
\end{equation*}%
(We mention here that $k_{1,}k_{2},\cdots $ will denote particular fixed
positive constants whereas $C$ will denote a generic positive constant whose
value can change from line to line.) It is therefore possible to compare the
exact results in (\ref{t1}) and (\ref{t3}) with the behaviour of $%
n^{-1}h_{x_{n}}(1)$ when $x/c_{n}\rightarrow 0$ or $x/c_{n}\rightarrow
\infty .$ It turns out that the ratio of the two can tend to $0,$ or a
finite constant, or oscillate, \ depending on the s.v. functions involved
and the exact behaviour of $x,$ except in the special case that $%
c_{n}\backsim Cn^{\eta }.$ (Here, and throughout, we write $1/\alpha =\eta .$%
) In fact, if $\overline{F}(x)\backsim C/(x^{\alpha }L_{0}(x)),$ one can
check that, when $x/c_{n}\rightarrow \infty ,$ 
\begin{equation*}
\frac{n\overline{F}(x)}{h_{x_{n}}(1)}\backsim \frac{L_{0}(c_{n})}{L_{0}(x)},
\end{equation*}%
and of course $L_{0}$ is asymptotically constant only in the aforementioned
special case. Similarly, the RHS of (\ref{t1}) only has the same asymptotic
behaviour as $n^{-1}h_{x_{n}}(1)$ in this same special case.
\end{remark}

\begin{remark}
In the spectrally negative case $\alpha \rho =1,$ without further
assumptions we know little about the asymptotic behaviour of $\overline{F},$
so it is clear that (C) doesn't generally hold in this case, and indeed it
is somewhat surprising that parts (A) and (B) do hold.
\end{remark}

\section{Preliminaries}

Throughout this section it will be assumed that $S$ is an a.s.r.w.. With $%
(\tau _{0},H_{0}):=(0,0)$ we write $(\boldsymbol{\tau ,H})=((\tau
_{n},H_{n}),n\geq 0)$ for the bivariate renewal process of strict ladder
times and heights, so that $\tau _{1}=T$ and $H_{1}=S_{T}$ is the first
ladder height; we also write $\tau $ and $H$ for $\tau _{1}$ and $H_{1}$. It
is known that there are sequences $a_{n}$ and $b_{n}$ such that $(\tau
_{n}/a_{n},H_{n}/b_{n})$ converges in distribution to a bivariate law whose
marginals are positive stable laws with parameters $\rho $ and $\alpha \rho $
respectively, with the proviso that when $\alpha \rho =1$ we replace the
stable limit of $H_{n}/b_{n}$ by a point mass at 1. Thus $a,b,c$ are
regularly varying with indexes $\rho ^{-1},(\alpha \rho )^{-1},$ and $\eta $
respectively. Furthermore we can assume, without loss of generality, the
existence of continuous, increasing functions $a,b,c$ such that $%
a_{n}=a(n),b_{n}=b(n),c_{n}=c(n),$ and 
\begin{equation}
b(t)=k_{3}c(a(t)),\text{ }t\geq 0.  \label{x}
\end{equation}%
(See \cite{dg} for details).

Write $A(y)=\int_{0}^{\infty }P(H>y)dy$. We will find the following
consequence of (\ref{x}) useful.

\begin{lemma}
\label{five}There is a constant $k_{4}$ such that 
\begin{equation}
U(c_{n})\backsim \frac{c_{n}}{A(c_{n})}\backsim \frac{k_{4}}{P(\tau >n)}%
\text{ as }n\rightarrow \infty .  \label{k}
\end{equation}
\end{lemma}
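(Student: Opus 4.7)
My plan is to prove the two asymptotic equivalences separately, exploiting the stable-law limits enjoyed by both $H$ and $\tau$. First I would establish that $U(y) \sim c \cdot y/A(y)$ as $y\to\infty$ for some positive constant $c$. Since $H_n/b_n$ converges in distribution to a positive stable law of index $\alpha\rho$, in the generic case $\alpha\rho<1$ the standard characterization of the domain of attraction of a positive stable law forces $P(H>y)$ to be regularly varying with index $-\alpha\rho$, and more precisely $P(H>y)\sim c_5/b^{-1}(y)$. Karamata's theorem then gives $A(y)\sim yP(H>y)/(1-\alpha\rho)$, while the Garsia--Lamperti/Erickson renewal theorem gives $U(y) \sim [\Gamma(1-\alpha\rho)\Gamma(1+\alpha\rho) P(H>y)]^{-1}$. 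Dividing these two equivalences yields the first claim with $c = 1/[\Gamma(2-\alpha\rho)\Gamma(1+\alpha\rho)]$, and also provides the companion identity $U(y)\sim c_7 b^{-1}(y)$ that I will need below.

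For the second equivalence, I would exploit the scaling identity (\ref{x}), namely $b(t)=k_3 c(a(t))$. Substituting $t = a^{-1}(n)$ gives $b(a^{-1}(n)) = k_3 c_n$, so that $a^{-1}(n) = b^{-1}(k_3 c_n)$. Since $b^{-1}$ is regularly varying of index $\alpha\rho$, this rearranges to $b^{-1}(c_n)\sim k_3^{-\alpha\rho} a^{-1}(n)$. Separately, the convergence of $\tau_n/a_n$ to a positive stable law of index $\rho$ gives, by the same sort of Tauberian relation, $P(\tau>n)\sim c_6/a^{-1}(n)$. Combining this with the identity $U(y)\sim c_7\, b^{-1}(y)$ from the previous step yields $U(c_n)\sim k_4/P(\tau>n)$ for an explicit positive constant $k_4$.

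The main obstacle is the spectrally negative boundary case $\alpha\rho=1$, where $H_n/b_n$ converges to the constant $1$ rather than to a genuine stable law, so $P(H>y)$ need no longer be regularly varying with index $-1$ and the infinite-mean renewal theorem does not apply verbatim. Here I would split into subcases: if $\mathbb{E}H<\infty$ (which covers the finite-variance case), then $U(y)\sim y/\mathbb{E}H$ by the elementary renewal theorem and $A(y)\to \mathbb{E}H$, so both $U(y)$ and $y/A(y)$ are asymptotic to $y/\mathbb{E}H$; if instead $\mathbb{E}H=\infty$ then $A$ is necessarily slowly varying at infinity, and Erickson's renewal theorem still delivers $U(y)\sim y/A(y)$. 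The scaling step via (\ref{x}) is unaffected in either subcase, being a consequence only of the regular variation of $a$ and $c$, so the second equivalence follows as before.
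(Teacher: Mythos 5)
Your argument is correct in substance but follows a genuinely different path from the paper's for the second equivalence. The paper disposes of $U(c_n)\sim c_n/A(c_n)$ by citing Erickson, as you do, but then obtains $c_n/A(c_n)\sim k_4/P(\tau>n)$ by reformulating Lemma~13 of \cite{vw} together with the Spitzer-type tail product $nP(\tau>n)P(\tau^->n)\to k_5$ --- that is, it passes through the weak \emph{descending} ladder epoch $\tau^-$. You instead stay entirely on the ascending side: you extract $U(y)\sim c_7\,b^{-1}(y)$ and $P(\tau>n)\sim c_6/a^{-1}(n)$ from the domain-of-attraction normalizations, and then link $b^{-1}(c_n)$ to $a^{-1}(n)$ through the scaling identity $b(t)=k_3c(a(t))$ from (\ref{x}) and the regular variation of $b^{-1}$. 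This is a more self-contained derivation that makes the role of (\ref{x}) explicit, at the cost of being longer than simply invoking \cite{vw}; the paper's route has the side benefit of feeding directly into Corollary~\ref{extra}.

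Two small points are worth flagging. First, your computation actually produces $U(y)\sim y/\bigl[\Gamma(2-\alpha\rho)\Gamma(1+\alpha\rho)\,A(y)\bigr]$, so the first $\sim$ in (\ref{k}) carries the nontrivial constant $\Gamma(2-\alpha\rho)^{-1}\Gamma(1+\alpha\rho)^{-1}$ when $\alpha\rho<1$; the lemma as printed suppresses it. That is a looseness in the paper's statement (it is exactly what Erickson's theorem gives), not a defect in your proof, but you should not silently drop the factor. Second, in the boundary case $\alpha\rho=1$ your "companion identity'' $U(y)\sim c_7 b^{-1}(y)$ cannot be obtained from the tail relation $P(H>b_n)\sim C'/n$, since $H$ is merely relatively stable rather than in the domain of attraction of a stable law; it instead follows from the relative-stability normalization $nA(b_n)\sim b_n$ combined with $U(y)\sim y/A(y)$. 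With that adjustment (and noting that $\Gamma(2-\alpha\rho)\Gamma(1+\alpha\rho)=1$ when $\alpha\rho=1$, so the constant issue disappears there) the argument goes through.
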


\begin{proof}
The first statement is due to Erickson \cite{e}, and the second is a slight
reformulation of Lemma 13 of \cite{vw}, using the fact that $nP(\tau
>n)P(\tau ^{-}>n)\rightarrow k_{5},$ where $\tau ^{-}=\min \{n\geq
1:S_{n}\leq 0\}$ is the first weak decreasing ladder time.
\end{proof}

\begin{corollary}
\label{extra}If $V$ is the renewal function in the weak decreasing ladder
height process then there is a constant $k_{5}$ such that 
\begin{equation}
U(c_{n})V(c_{n})\backsim k_{6}n\text{ as }n\rightarrow \infty .  \label{l}
\end{equation}
\end{corollary}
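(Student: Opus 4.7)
The plan is to mirror the proof of Lemma~\ref{five} for the weak descending ladder process and then combine the two estimates via the Sparre--Andersen-type identity already used there.

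First, I would establish a companion estimate for $V$, namely that there is a constant $\tilde k_4>0$ such that
\[
V(c_n)\sim \frac{\tilde k_4}{P(\tau^->n)}\qquad\text{as }n\to\infty.
\]
In the non-lattice case this is immediate: the weak descending ladder process of $S$ coincides almost surely with the strict ascending ladder process of the dual walk $-S$, which is itself an a.s.r.w.\ with positivity parameter $1-\rho$ and the same norming sequence $c_n$, so Lemma~\ref{five} applied to $-S$ yields the claim. In the lattice case one repeats the two ingredients behind Lemma~\ref{five}: Erickson's theorem gives $V(c_n)\sim c_n/B(c_n)$, where $B$ is defined analogously to $A$ for the weak descending ladder heights, and the variant of Lemma~13 of \cite{vw} appropriate to the weak descending ladder supplies $c_n/B(c_n)\sim \tilde k_4/P(\tau^->n)$.

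Next, multiplying this companion estimate by (\ref{k}) gives
\[
U(c_n)V(c_n)\sim \frac{k_4\tilde k_4}{P(\tau>n)\,P(\tau^->n)},
\]
and the duality identity $n\,P(\tau>n)\,P(\tau^->n)\to k_5$ already invoked in the proof of Lemma~\ref{five} then yields $U(c_n)V(c_n)\sim k_6 n$ with $k_6:=k_4\tilde k_4/k_5$.

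The main obstacle is the companion estimate for $V$. In the lattice case one has to check that Lemma~13 of \cite{vw}, which is stated for the strict ascending ladder, carries over to the weak descending ladder. The two renewal functions attached to $-S$ (strict ascending) and $S$ (weak descending) differ only by simple atom-at-zero corrections that do not affect the relevant first-order asymptotics, but some bookkeeping is needed to pin down the precise value of $\tilde k_4$. Once this is in hand the rest of the argument is a one-line multiplication.
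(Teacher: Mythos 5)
Your argument is exactly the paper's: establish the companion estimate $V(c_n)\sim \tilde k_4/P(\tau^->n)$, multiply by (\ref{k}), and invoke $nP(\tau>n)P(\tau^->n)\to k_5$. The paper states this in one line (``This follows from $nP(\tau>n)P(\tau^->n)\to k_5$, (\ref{k}), and the analogous statement about $V$''), which is precisely what you spell out.
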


\begin{proof}
This follows from $nP(\tau >n)P(\tau ^{-}>n)\rightarrow k_{5},$ (\ref{k}),
and the analogous statement about $V.$
\end{proof}

We will also need the following conditional functional limit theorem, in
which $X^{(n)}(t)=S_{\left\lfloor nt\right\rfloor }/c_{n},0\leq t\leq 1,$
where $\left\lfloor \cdot \right\rfloor $ denotes the integer part function,
and $S$ is an a.s.r.w.

\begin{proposition}
\label{two}Let $P_{x}$ denote the probability measure under which $S$ starts
at $x\geq 0,$ and put $T^{-}=\min (n\geq 1:S_{n}\leq 0),$

(i) If $x/c_{n}\rightarrow 0,$ then $P_{x}(X^{(n)}\in \cdot |T^{-}>t)$
converges weakly on the Skorohod space to $P(Z^{(1)}\in \cdot ),$ where $%
Z^{(1)}$ denotes the stable meander of length 1 based on $Y,$

(ii) If $x/c_{n}\rightarrow a>0,$ then $P_{x}(X^{(n)}\in \cdot |T^{-}>t)$
converges weakly on the Skorohod space to $P(a+Y\in \cdot |a+\inf_{s\leq
1}Y_{s}>0).$
\end{proposition}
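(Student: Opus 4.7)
Both assertions will be derived from the unconditional functional invariance principle $X^{(n)} \Rightarrow a+Y$ (with $a \ge 0$), which holds under $P_x$ whenever $x/c_n \to a$. Part (ii) reduces to a Portmanteau-type argument because the conditioning event has strictly positive limiting probability and its boundary has zero mass. Part (i) is more delicate since the conditioning event has vanishing probability, so a reduction to the classical $x=0$ case is needed.

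For (ii), I would combine the FCLT $X^{(n)} \Rightarrow a+Y$ with the fact that the infimum functional $\omega \mapsto \inf_{0 \le s \le 1} \omega(s)$ is continuous at almost every realisation of $Y$ (the stable process attains its minimum uniquely on $[0,1]$ a.s.). The continuous mapping theorem then yields the joint convergence
$$
\bigl(X^{(n)},\, \inf_{s \le 1} X^{(n)}(s)\bigr) \Rightarrow \bigl(a+Y,\, a+\inf_{s \le 1} Y_s\bigr).
$$
Since $X^{(n)}$ is piecewise constant, the event $\{T^- > n\}$ coincides with $\{\inf_{s \le 1} X^{(n)}(s) > 0\}$. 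As $\inf_{s \le 1} Y_s$ has a density and $a > 0$, the limiting event $\{a + \inf_{s \le 1} Y_s > 0\}$ has strictly positive probability and zero boundary mass, so the Portmanteau theorem identifies the conditional limit as the law of $a+Y$ conditioned on $a + \inf_{s \le 1} Y_s > 0$.

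For (i) one cannot condition directly, so the plan is to reduce to the classical $x=0$ case (where convergence to the stable meander $Z^{(1)}$ is known) by a change-of-measure argument. Fix a sequence $k_n \to \infty$ with $k_n/n \to 0$. Using the Markov property at time $k_n$, the law $P_x(X^{(n)} \in \cdot \mid T^- > n)$ can be compared on $\sigma(X^{(n)}(s),\, s \ge k_n/n)$ with $P_0(X^{(n)} \in \cdot \mid T^- > n)$ via a Radon--Nikodym factor proportional to $V(S_{k_n})$, relying on the asymptotic equivalence $P_y(T^- > m) \sim V(y) P_0(T^- > m)$ valid uniformly for $y$ in the relevant range (a consequence of Lemma \ref{five} applied to the descending ladder process, combined with the estimates of \cite{vw}). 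The initial block $(X^{(n)}(s),\, s \le k_n/n)$ becomes negligible in the Skorohod topology, since $k_n/n \to 0$ and $S_{k_n} = O_P(c_{k_n}) = o_P(c_n)$ under the conditioning.

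The main obstacle is precisely the uniform equivalence $P_y(T^- > m) \sim V(y) P_0(T^- > m)$: pointwise it is classical, but establishing it uniformly as $y$ varies with $m$ requires sharp control of the descending ladder-height overshoot distribution and is the technical heart of the argument. Once this uniformity is in hand, the finite-dimensional distributions of the conditioned process transfer from the $x=0$ case to $x/c_n \to 0$, and tightness (inherited from the unconditional FCLT, since the Radon--Nikodym factor is bounded in the relevant range) completes the identification of the stable meander as the limit.
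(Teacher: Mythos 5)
Your part (ii) is in essence the paper's argument: since $P(a+\inf_{s\le 1}Y_s>0)>0$ and the limiting infimum has a continuous law (so the boundary event carries no mass), the unconditional FCLT plus Portmanteau identifies the conditional limit. Two small remarks: the infimum functional is continuous on all of $D[0,1]$ (if $\omega_n\circ\lambda_n\to\omega$ uniformly with $\lambda_n$ a time change then $\inf\omega_n=\inf\omega_n\circ\lambda_n\to\inf\omega$), so uniqueness of the a.s.\ minimiser is not needed; and the zero-boundary-mass claim requires only that $\inf_{s\le 1}Y_s$ has no atom at $-a$, not that the minimum is unique.

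For part (i) you have found a reasonable alternative route, but it is not the one the paper takes, and your sketch has a conceptual slip. The paper cites \cite{rad4} for $x\equiv 0$ and then invokes the technique of Section~5 of \cite{bd}: that technique proves a functional limit theorem for the Doob $h$-transform (random walk conditioned to stay non-negative \emph{forever}, with $h=V$) started from $x=o(c_n)$, and the conditioned-at-time-$n$ statement is then deduced from it by the mutual absolute continuity of the meander and the process conditioned to stay positive, in the spirit of Section~4 of \cite{cd}. Your proposal short-circuits the $h$-transform and attempts a direct comparison of $P_x(\cdot\mid T^->n)$ with $P_0(\cdot\mid T^->n)$ after time $k_n$. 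This can be made to work, but the phrase ``a Radon--Nikodym factor proportional to $V(S_{k_n})$'' describes the change of measure from $P(\cdot\mid T^->k_n)$ to the $h$-transform $\widehat P$, not the density between the two conditioned laws you want to compare; the honest comparison is a weighted mixture over the law of $S_{k_n}$, using that $P_y(T^->m)\sim V(y)P_0(T^->m)$ uniformly for $y=o(c_m)$ both to renormalise and to show that the conditional law of $S_{k_n}$ forgets $x$. You have correctly isolated that uniform ladder estimate as the technical heart; that is also what underlies the $h$-transform FCLT in \cite{bd}. The paper's modular route ($h$-transform FCLT, then absolute continuity) avoids manipulating the $k_n$-block directly, at the price of importing the absolute-continuity machinery; your direct route stays within the conditioned-at-time-$n$ world but requires the weighted-mixture bookkeeping to be done carefully, and the bald claim that ``tightness is inherited because the RN factor is bounded'' is not quite right since $V$ is unbounded -- one needs the high-probability bound $V(S_{k_n})=O(V(Kc_{k_n}))$ together with $V(c_{k_n})\asymp 1/P(\tau^->k_n)$ from Lemma~\ref{five}.
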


\begin{proof}
(i) This is proved in \cite{rad4} for the special case $x\equiv 0,$ and it
is not difficult to deduce the quoted result by using the technique in
Section 5 of \cite{bd}. ( Actually the proof in \cite{bd} is for the case $%
\alpha =2,$ and concerns convergence to the Bessel process, rather than the
Brownian meander: but these processes are mutually absolutely continuous,
and so are their analogues for stable processes. We can therefore deduce
convergence to the meander from convergence to the process conditioned to
stay positive, as is done in a more general scenario in Section 4 of \cite%
{cd}.)

(ii) Since the probability of the limiting conditioning event is positive,
this follows from the weak convergence of $X^{(n)}$ to $Y.$
\end{proof}

\emph{Until further notice we assume we are in the lattice case, and }$%
x,y,z\cdots $ \emph{will be assumed to take non-negative integer values only.%
}

We will write%
\begin{equation*}
g(m,y)=\sum_{n=0}^{\infty }P(T_{n}=m,H_{n}=y)\text{ and }g(y)=\sum_{n=0}^{%
\infty }P(H_{n}=y)
\end{equation*}%
for the bivariate renewal mass function of $(\boldsymbol{\tau ,H})$ and the
renewal mass function of $\boldsymbol{H}$ respectively$.$ Our proofs are
based on the following obvious representation:%
\begin{equation}
P(T_{x}=n+1)=\sum_{y\geq 0}P(T_{x}>n,S_{n}=x-y)\overline{F}(y),\text{ }x\geq
0,n>0,  \label{i}
\end{equation}%
To exploit this we need good estimates of $P(T_{x}>n,S_{n}>x-y),$ and we
derive these from the formula 
\begin{equation}
P(S_{n}=x-y,T_{x}>n)=\sum_{z=0}^{y\wedge
x}\sum_{r=0}^{n}g(r,x-z)g^{-}(n-r,y-z),  \label{main}
\end{equation}%
where $g^{-}$ denotes the bivariate mass function in the weak downgoing
ladder process of $S.$ Formula (\ref{main}), which extends a result
originally due to Spitzer, follows by decomposing the event on the LHS
according to the time and position of the maximum, and using the well-known
duality result: 
\begin{equation}
g^{-}(m,u)=P(S_{m}=-u,\tau >m).  \label{d-}
\end{equation}%
(See Lemma 2.1 in \cite{ad}). Of course we also have%
\begin{equation}
g(m,u)=P(S_{m}=u,\tau ^{-}>m),  \label{d+}
\end{equation}%
and our main tool in estimating $P(S_{n}=x-y,T_{x}>n)$ will be the following
estimates for $g$ and $g^{-}.$ The results for $g$ are established in \cite%
{vw}, where they are stated as estimates for the conditional probability $%
P(S_{m}=u|\tau ^{-}>m).$ (See Theorems 5 and 6 in \cite{vw}.) The results
for $g^{-}$ can be derived by applying those results to $-S,$ and then using
the calculation given on page 100 of \cite{ad2} to deduce the result for the
weak ladder process. Recall that $V(x)=$ $\sum_{m=0}^{\infty
}\sum_{u=0}^{x}g^{-}(m,u)$, and write $f$ for the density of $Y_{1},$ where $%
Y$ is the limiting stable process. Also $p$ and $\tilde{p}$ stand for the
densities of $Z_{1}^{(1)}$ and $\tilde{Z}_{1}^{(1)},$ the stable meanders of
length $1$ at time $1$ corresponding to $Y$ and $-Y.$

\begin{lemma}
\label{nine}Uniformly in $x\geq 1$ and $x\geq 0,$ respectively,%
\begin{equation}
\frac{c_{n}g(n,x)}{P(\tau ^{-}>n)}=p(x/c_{n})+o(1)\text{ and }\frac{%
c_{n}g^{-}(n,x)}{P(\tau >n)}=\tilde{p}(x/c_{n})+o(1)\text{ as }n\rightarrow
\infty .  \label{l1}
\end{equation}%
Also, uniformly as $x/c_{n}\rightarrow 0,$%
\begin{equation}
g(n,x)\backsim \frac{f(0)U(x-1)}{nc_{n}}\text{ for }x\geq 1\text{ and }%
g^{-}(n,x)\backsim \frac{f(0)V(x)}{nc_{n}}\text{ for }x\geq 0.  \label{l2}
\end{equation}
\end{lemma}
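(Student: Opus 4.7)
The plan is to combine the duality identities (\ref{d+}) and (\ref{d-}) with the conditional local limit theorems of Vatutin and Wachtel \cite{vw}, which are already formulated as local limit theorems for $S$ conditioned on a ladder-time event.

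For the estimates on $g$: identity (\ref{d+}) gives $g(n,x)=P(S_n=x,\tau^{-}>n)$, so
$$\frac{c_n g(n,x)}{P(\tau^{-}>n)}=c_n\, P(S_n=x\mid\tau^{-}>n),$$
and the first part of (\ref{l1}) is exactly the statement that the scaled conditional mass function converges pointwise to the density $p$ of $Z_1^{(1)}$. This is Theorems~5 of \cite{vw}. The small-deviation statement in (\ref{l2}) is the content of Theorem~6 of \cite{vw}: when $x/c_n\to 0$, the conditioning event is dominated by paths that climb to a ``typical'' level and then return near $x$, and the associated harmonic function for the walk killed on entering $(-\infty,0]$ is exactly $U(\cdot-1)$ in the lattice case. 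The factor $f(0)$ emerges in the usual Gnedenko fashion, as the density of $Y_1$ at $0$ controls the return density after the walk has been conditioned to stay positive.

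For the estimates on $g^-$: I would apply the same two results to the reversed walk $\tilde S:=-S$, which is itself an a.s.r.w.\ with index $\alpha$ and positivity parameter $1-\rho$. Via (\ref{d-}) we have $g^{-}(n,x)=P(\tilde S_n=x,\tilde{\tau}^{-,s}>n)$, where $\tilde{\tau}^{-,s}$ is the \emph{strict} descending ladder time of $\tilde S$, so Theorems~5 and~6 of \cite{vw} applied to $\tilde S$ deliver the claimed formulas but with the strict decreasing ladder process of $S$ in place of the weak one. The final ingredient is the calculation on page~100 of \cite{ad2}, which expresses the weak decreasing ladder quantities as explicit multiples of their strict counterparts (the factor depending only on $P(X_1=0)$); this factor is absorbed into the definition of $V$ and supplies the constant needed in (\ref{l2}).

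The main obstacle is twofold. First, the estimates of \cite{vw} must be used in genuinely \emph{uniform} form: one needs to check that the uniformity stated there really extends over $x\geq 1$ (resp.\ $x\geq 0$) in (\ref{l1}), and over the whole range $x/c_n\to 0$ in (\ref{l2}); in the second regime this is standard, using monotonicity of $U$ and $V$ together with the Potter bounds on the associated slowly varying factors. Second, the strict-to-weak bookkeeping for $g^-$ must be done carefully so that $V$ and $f(0)$ (rather than their strict analogues) appear with the correct constants. Everything else -- identification of the limit density as $\tilde p$ and of the positivity parameter of $-S$ as $1-\rho$ -- is routine.
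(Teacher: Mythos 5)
Your proposal matches the paper's own argument essentially verbatim: the paper does not give a formal proof of Lemma~\ref{nine}, but states in the text immediately preceding it that the estimates for $g$ are Theorems~5 and~6 of \cite{vw} (read via the duality identity (\ref{d+})), and that the estimates for $g^{-}$ follow by applying those theorems to $-S$ and invoking the page~100 calculation of \cite{ad2} to pass from the strict decreasing ladder process of $-S$ to the weak decreasing ladder process of $S$. You have correctly identified the duality identities, the source theorems, the reflection to $-S$ with positivity parameter $1-\rho$, and the strict-to-weak conversion, and your remarks on the uniformity and bookkeeping issues are exactly the points a careful reader would check; so this is the same route as the paper, just spelled out in slightly more detail.
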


From this we can deduce the following result, which is a minor extension of
Lemma 20 in \cite{vw}.

\begin{lemma}
\label{three}Given any constant $C_{1}$ there exists a constant $C_{2}$ such
that for all $n\geq 1$ and $0\leq x\leq C_{1}c_{n}$%
\begin{equation}
g(n,x)\leq \frac{C_{2}U(x)}{nc_{n}},\text{ and }g^{-}(n,x)\leq \frac{%
C_{2}V(x)}{nc_{n}}.  \label{e}
\end{equation}
\end{lemma}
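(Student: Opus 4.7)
The strategy is to split the range $0\le x\le C_{1}c_{n}$ at some threshold $\delta c_{n}$ (for $\delta>0$ to be chosen small but fixed) and handle the two overlapping regimes with the two parts of Lemma~\ref{nine}. In each regime the upper bound $C_{2}U(x)/(nc_{n})$ will be obtained directly; the constants from the two regimes and from finitely many small $n$ are then combined at the end.

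\textbf{Small-$x$ regime ($0\le x\le \delta c_{n}$).} Here $x/c_{n}\to 0$ uniformly, so the second part of Lemma~\ref{nine} applies: for $1\le x\le \delta c_{n}$ and $n$ large,
\[
g(n,x)\le 2f(0)\,\frac{U(x-1)}{nc_{n}}\le 2f(0)\,\frac{U(x)}{nc_{n}}.
\]
The case $x=0$ is trivial since $H_{0}=0$ forces $g(n,0)=0$ for all $n\ge 1$. The estimate for $g^{-}$ in this regime is identical, using $g^{-}(n,x)\backsim f(0)V(x)/(nc_{n})$.

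\textbf{Bulk regime ($\delta c_{n}\le x\le C_{1}c_{n}$).} The first part of Lemma~\ref{nine}, together with the fact that the stable-meander density $p$ is bounded, yields a constant $M$ with $g(n,x)\le MP(\tau^{-}>n)/c_{n}$ uniformly in $x\ge 1$, for all large $n$. Multiplying and dividing by $n$ and invoking Lemma~\ref{five} together with the duality relation $nP(\tau>n)P(\tau^{-}>n)\to k_{5}$ gives
\[
nP(\tau^{-}>n)\backsim \frac{k_{5}}{P(\tau>n)}\backsim \frac{k_{5}}{k_{4}}\,U(c_{n}).
\]
Since $H$ is in the domain of attraction of a positive stable law of index $\alpha\rho\in(0,1]$, the renewal function $U$ is regularly varying of index $\alpha\rho$ (in the boundary case $\alpha\rho=1$ one has $U(x)\sim x/E(H)$ or, in the spectrally negative stable-limit case, the analogous linear behaviour), so Potter bounds give a constant $K_{\delta}$ with $U(c_{n})\le K_{\delta}U(x)$ whenever $x\ge \delta c_{n}$ and $n$ is large. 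Combining,
\[
g(n,x)\le \frac{M n P(\tau^{-}>n)}{nc_{n}}\le \frac{C K_{\delta}\,U(x)}{nc_{n}}.
\]
The argument for $g^{-}$ runs in parallel, interchanging $\tau$ and $\tau^{-}$ and using Corollary~\ref{extra} to write $nP(\tau>n)\backsim (k_{4}/k_{6})V(c_{n})$; regular variation of $V$ then provides the analogue of $K_{\delta}$.

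\textbf{Finishing up.} For the finitely many small $n$ for which the $o(1)$ terms in Lemma~\ref{nine} are not yet small, use the trivial bounds $g(n,x),g^{-}(n,x)\le 1$ and absorb everything by enlarging $C_{2}$; this works because on $\{n\le n_{0}\}$ the quantity $nc_{n}$ is bounded. Taking the maximum of the constants from the two regimes and the small-$n$ correction produces the desired $C_{2}$.

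The only mildly delicate point is verifying that the Potter-type comparison $U(c_{n})\le K_{\delta}U(x)$ (and similarly for $V$) is uniform in $x\in[\delta c_{n},C_{1}c_{n}]$; this is straightforward from regular variation, but needs care in the boundary case $\alpha\rho=1$ where one should fall back on the asymptotic linearity of $U$ rather than a strict power-law bound. Aside from that, the proof is a direct packaging of Lemmas~\ref{five} and~\ref{nine}, following Lemma~20 of~\cite{vw}.
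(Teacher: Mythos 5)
Your proof is correct and follows essentially the same approach as the paper's: bound $p$ on the compact range to get $g(n,x)\leq M P(\tau^-> n)/c_n$, convert $nP(\tau^->n)$ to $U(c_n)$ via Lemma~\ref{five} and the duality constant $k_5$, and compare $U(c_n)$ to $U(x)$ by regular variation. You make explicit what the paper leaves implicit — the split at $\delta c_n$ and the use of the second part of Lemma~\ref{nine} to cover $x\le\delta c_n$ — but the underlying idea is identical.
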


.

\begin{proof}
Observe that on any interval $[\delta c_{n},C_{1}c_{n}],$ \ $p(x/c_{n})$ is
bounded, and $U(x)\geq U(\delta c_{n}),$ so by Lemma \ref{five} we see that
the ratio%
\begin{equation*}
\frac{P(\tau ^{-}>n)}{c_{n}}/\frac{U(x)}{nc_{n}}\leq nP(\tau ^{-}>n)U(\delta
c_{n})
\end{equation*}
is also bounded above. A similar proof works for $g^{-}.$
\end{proof}

Just as these local estimates for the distribution of $S_{n}$ on the event $%
\tau >n$ played a crucial r\^{o}le in the proof of (\ref{a}) in \cite{vw},
we need similar information on the event $T_{x}>n.$ This is given in the
following result, where for $x>0$ we write $q_{x}(\cdot )$ for the density
of $P(Y_{1}\in x-\cdot :\sup_{t\leq 1}Y_{t}<x).$ We also write $%
x/c_{n}=x_{n} $ and $y/c_{n}=y_{n}.$

\begin{proposition}
\label{four}(i) Uniformly as $x_{n}\vee y_{n}\rightarrow 0$%
\begin{equation}
P(S_{n}=x-y,T_{x}>n)\backsim \frac{U(x)f(0)V(y)}{nc_{n}}.  \label{A}
\end{equation}%
(ii) For any $D>1,$ uniformly for $y_{n}\in \lbrack D^{-1},D],$ 
\begin{equation}
P(S_{n}=x-y,T_{x}>n)\backsim \frac{U(x)P(\tau >n)\tilde{p}(y_{n})}{c_{n}}%
\text{ as }n\rightarrow \infty \text{ and }x_{n}\rightarrow 0,  \label{B}
\end{equation}%
and uniformly for $x_{n}\in \lbrack D^{-1},D],$ 
\begin{equation}
P(S_{n}=x-y,T_{x}>n)\backsim \frac{V(y)P(\tau ^{-}>n)p(x_{n})}{c_{n}}\text{
as }n\rightarrow \infty \text{ and }y_{n}\rightarrow 0.  \label{D}
\end{equation}%
(iii) For any $D>1,$ uniformly for $x_{n}\in \lbrack D^{-1},D]$ and $%
y_{n}\in \lbrack D^{-1},D],$%
\begin{equation}
P(S_{n}=x-y,T_{x}>n)\backsim \frac{q_{x_{n}}(y_{n})}{c_{n}}\text{ as }%
n\rightarrow \infty .  \label{C}
\end{equation}
\end{proposition}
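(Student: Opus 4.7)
The starting point in all three regimes is the decomposition (\ref{main}):
$$P(S_n=x-y,T_x>n) = \sum_{z=0}^{x\wedge y}\sum_{r=0}^{n} g(r,x-z)\,g^-(n-r,y-z).$$
In each regime I would substitute the relevant local estimate from Lemma~\ref{nine} into the two factors, using the uniform bound of Lemma~\ref{three} to deal with the values of $r$ for which the desired asymptotic is not applicable.

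For part (i), both $x-z$ and $y-z$ are $o(c_n)$, so the small-argument asymptotic (\ref{l2}) applies throughout. As functions of $r$, the sequences $g(\cdot,x-z)$ and $g^-(\cdot,y-z)$ are asymptotically regularly varying of index $-(1+\eta)$ with $\eta=1/\alpha>0$, hence summable; the standard convolution asymptotic for summable regularly varying sequences of index $<-1$ gives
$$\sum_{r=0}^{n} g(r,x-z)\,g^-(n-r,y-z) \sim g(n,x-z)\,v(y-z) + g^-(n,y-z)\,g(x-z),$$
where $g(u):=\sum_{r\geq 0}g(r,u)=U(u)-U(u-1)$ and $v(u):=\sum_{s\geq 0}g^-(s,u)=V(u)-V(u-1)$ are the renewal mass functions of the two ladder height processes. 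Plugging (\ref{l2}) into $g(n,\cdot)$ and $g^-(n,\cdot)$ and then summing over $z$ reduces (\ref{A}) to the purely algebraic identity
$$\sum_{z=0}^{x\wedge y}\bigl[U(x-z-1)\,v(y-z) + V(y-z)\,g(x-z)\bigr]=U(x)V(y),$$
which follows by a telescoping argument using the definitions of $g$ and $v$ in terms of $U$ and $V$.

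For part (ii) case (B), with $x_n\to 0$ and $y_n\in[D^{-1},D]$, I apply (\ref{l2}) to $g(r,x-z)$ and the meander asymptotic (\ref{l1}) to $g^-(n-r,y-z)$. The sum $\sum_z g(x-z)$ is asymptotic to $U(x)$, while after rescaling $r=ns$ and using the regular variation of $c$ and $P(\tau>\cdot)$, the sum over $r$ becomes a Riemann integral that evaluates to $P(\tau>n)\,\tilde{p}(y_n)$ up to the correct time-scale factor, giving (\ref{B}). Case (D) is the mirror image, obtained by running the same argument on $-S$. For part (iii) I apply the meander asymptotic (\ref{l1}) to both factors; after the rescalings $r=ns$ and $z=c_n w$, the double sum becomes a Riemann integral, which---using Lemma~\ref{five} and $nP(\tau>n)P(\tau^->n)\to k_5$---I would identify with the integral representation of $q_{x_n}(y_n)$ obtained by decomposing the stable process $Y$ at its maximum up to time~$1$ (the Millar--Denisov path decomposition).

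The main obstacle in each case is uniformity over the respective range of $x$ and $y$. This reduces, in every case, to splitting the $r$-sum into a central range on which the relevant asymptotic from Lemma~\ref{nine} holds uniformly and boundary ranges (small $r$ or $r$ close to $n$) on which only the pointwise bound of Lemma~\ref{three} is available. Part (i) is especially delicate because the dominant contribution to the convolution estimate comes precisely from the two boundary regions, so the asymptotic and the bound must be patched together with care; for parts (B), (D) and (iii) the boundary contributions are of lower order and can be dismissed directly via Lemma~\ref{three}.
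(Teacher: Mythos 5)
Your proposal takes essentially the same route as the paper: start from the decomposition (\ref{main}), feed in the local estimates of Lemma~\ref{nine}, split the $r$-sum into boundary and central ranges, close part~(i) with a telescoping identity for $U,V$ and their mass functions, and close part~(iii) by turning the central range into a Riemann sum and matching against the stable-process identity (\ref{jb}) in Proposition~\ref{id}. Your telescoping identity $\sum_{z}[U(x-z-1)v(y-z)+V(y-z)u(x-z)]=U(x)V(y)$ is exactly the one the paper uses, and your observation that part~(i) is delicate because both tails of the $r$-sum contribute to leading order while the middle is negligible is precisely what makes (\ref{A}) the hardest of the four cases.

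One claim is wrong, though, and you should fix it. For part~(ii)/(\ref{B}) you write that ``after rescaling $r=ns$ \dots the sum over $r$ becomes a Riemann integral that evaluates to $P(\tau>n)\tilde p(y_n)$.'' In that regime ($x_n\to 0$, $y_n$ bounded away from $0$ and $\infty$) the $r$-sum is concentrated at $r=O(1)$, not at $r\asymp n$: one sums $g(r,x-z)$ over $r\le\delta n$ to recover the renewal mass $u(x-z)$ (as you yourself indicate one sentence later with $\sum_z g(x-z)\sim U(x)$), and on that range $g^-(n-r,y-z)$ is uniformly $P(\tau>n)\tilde p(y_n)/c_n(1+o(1))$ because $n-r\sim n$ and $(y-z)/c_{n-r}\sim y_n$. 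A rescaling $r=ns$ with $s$ bounded away from $0$ would give a contribution of strictly lower order of magnitude. The Riemann-integral step is genuinely needed only in part~(iii), where the central range $\delta n\le r\le(1-\delta)n$ dominates. Also, in (iii) the identification with $q_{x_n}(y_n)$ via (\ref{jb}) or a path decomposition at the maximum only fixes the answer up to a multiplicative constant (the ladder renewal densities of $Y$ are known only up to constants); the paper eliminates the constant by summing over $y$ and comparing with (\ref{0.1}), and your write-up should include that final normalisation step.
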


The proof of this result is given in the next section. We can repeat the
argument used in Lemma \ref{three} to get the following corollary.

\begin{corollary}
\label{twelve}Given any constant $C_{1}$ there exists a constant $C_{2}$
such that for all $n\geq 1$ and $0\leq x\leq C_{1}c_{n},0\leq y\leq
C_{1}c_{n},$%
\begin{equation*}
P(S_{n}=x-y,T_{x}>n)\leq \frac{C_{2}U(x)f(0)V(y)}{nc_{n}}.
\end{equation*}
\end{corollary}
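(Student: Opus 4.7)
The plan is to follow the template of Lemma \ref{three}, substituting Proposition \ref{four} for Lemma \ref{nine}. Fix $\delta\in(0,1)$ with $1/\delta>C_1$ and partition the rectangle $\{(x,y): 0\le x,y\le C_1 c_n\}$ into four pieces according as $x_n:=x/c_n$ and $y_n:=y/c_n$ lie in $[0,\delta]$ or in $(\delta,C_1]$; the four formulas (\ref{A}), (\ref{B}), (\ref{D}), (\ref{C}) of Proposition \ref{four} are then tailored one to each piece, giving uniform asymptotics there as $n\to\infty$.

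On the piece $\{x_n\vee y_n\le\delta\}$, (\ref{A}) already matches the claimed majorant $U(x)f(0)V(y)/(nc_n)$, so nothing is to be done. On the piece $\{x_n\le\delta,\ y_n\in(\delta,C_1]\}$, dividing (\ref{B}) by $U(x)V(y)/(nc_n)$ produces the ratio $nP(\tau>n)\tilde p(y_n)/V(y)$, in which $\tilde p$ is bounded on $[\delta,1/\delta]$, $V(y)\ge V(\delta c_n)$ by monotonicity, and $nP(\tau>n)=O(V(c_n))$ via Lemma \ref{five} (applied both to $S$ and to $-S$) together with $nP(\tau>n)P(\tau^->n)\to k_5$; regular variation of $V$ then closes the bound. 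The symmetric piece is handled identically through (\ref{D}). On the remaining piece $\{x_n,y_n\in(\delta,C_1]\}$, the analogous division of (\ref{C}) yields $nq_{x_n}(y_n)/(U(x)V(y))$; the density $q$ is jointly continuous and so bounded on the compact square $[\delta,1/\delta]^2$, while Corollary \ref{extra} combined with regular variation gives $U(\delta c_n)V(\delta c_n)$ of order $n$, and the ratio is again bounded.

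The four estimates together yield the claimed inequality for all $n\ge N$; for the finitely many smaller $n$, $P(S_n=x-y,T_x>n)\le 1$ and $U(x)V(y)/(nc_n)\ge 1/(Nc_N)$ (valid since $U(x),V(y)\ge 1$ for all $x,y\ge 0$) let us enlarge $C_2$ to absorb them. The main, though still routine, obstacle is collapsing the four different scalings produced by Proposition \ref{four} to the common majorant $U(x)V(y)/(nc_n)$, which is achieved through the identifications $nP(\tau>n)=O(V(c_n))$, $nP(\tau^->n)=O(U(c_n))$ and $U(c_n)V(c_n)\backsim k_6 n$.
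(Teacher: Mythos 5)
Your proposal is correct and follows the route the paper itself indicates, namely repeating the argument of Lemma \ref{three} with the four estimates of Proposition \ref{four} in place of Lemma \ref{nine}, and using Lemma \ref{five} and Corollary \ref{extra} to collapse the various scalings to the common majorant $U(x)V(y)/(nc_n)$. One small imprecision worth flagging: parts (i) and (ii) of Proposition \ref{four} give asymptotics only in the limits $x_n\vee y_n\to 0$, $x_n\to 0$, $y_n\to 0$, so for a prefixed $\delta$ they do not automatically furnish upper bounds on the pieces $\{x_n\vee y_n\le\delta\}$, $\{x_n\le\delta,\ y_n\in(\delta,C_1]\}$ and so on; the cleanest repair is a subsequence argument (suppose the ratio diverges along some $(n_k,x_k,y_k)$, pass to a subsequence with $x_{n_k}/c_{n_k}\to a$ and $y_{n_k}/c_{n_k}\to b$ in $[0,C_1]^2$, and observe that exactly one of the four estimates of Proposition \ref{four} then applies and, via the very computations you carry out, gives a bounded ratio, a contradiction).
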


It is apparent that we will also need information about the behaviour of $%
g(n,x),$ or equivalently of $P(S_{n}=x,\tau ^{-}>n),$ in the case $%
x/c_{n}\rightarrow \infty $. Fortunately this has been obtained recently in 
\cite{ej}, and we quote Propositions 11 and 12 therein as (\ref{5.1}) and (%
\ref{5.3}). The related unconditional results (\ref{5.x}) and (\ref{5.y})
have been proved in special cases in \cite{rad1}, \cite{rad2}, and \cite{ej}%
, and the general results can be deduced from Theorem 2.1 of \cite{dds}.

\begin{proposition}
\label{thirteen}If $S$ is an asrw with $\alpha \rho <1,$ then, uniformly for 
$x$ such that $x/c_{n}\rightarrow \infty ,$%
\begin{equation}
P(S_{n}>x)\backsim nP(S_{1}>x)\text{ as }n\rightarrow \infty ,\text{ and}
\label{5.x}
\end{equation}%
\begin{equation}
\text{ }P(S_{n}>x,\tau ^{-}>n)\backsim \rho ^{-1}P(S_{n}>x)P(\tau ^{-}>n)%
\text{ as }n\rightarrow \infty .  \label{5.1}
\end{equation}%
If, additionally, (\ref{add}) holds, then%
\begin{equation}
P(S_{n}\in \lbrack x,x+\Delta ))\backsim nf_{x}^{\Delta }\text{ as }%
n\rightarrow \infty  \label{5.y}
\end{equation}%
and 
\begin{equation}
\text{ }P(S_{n}\in \lbrack x,x+\Delta ),\tau ^{-}>n)\backsim \rho
^{-1}nf_{x}^{\Delta }P(\tau ^{-}>n)\text{ as }n\rightarrow \infty .
\label{5.3}
\end{equation}
\end{proposition}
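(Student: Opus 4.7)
The plan is to exploit the heavy-tail \textquotedblleft one big jump\textquotedblright\ principle. For (\ref{5.x}), I would begin with the decomposition of $\{S_{n}>x\}$ according to the largest summand $M_{n}=\max_{i\leq n}X_{i}$. The event $\{S_{n}>x,\ M_{n}>x-\varepsilon c_{n}\}$ captures the dominant contribution, whereas $\{S_{n}>x,\ M_{n}\leq x-\varepsilon c_{n}\}$ is asymptotically negligible by a Fuk--Nagaev truncation estimate applied to $S_{n}$ with individual steps truncated at $x-\varepsilon c_{n}$. Since $x/c_{n}\rightarrow \infty $ forces $n\overline{F}(x)\rightarrow 0$, inclusion--exclusion yields $P(M_{n}>x)\backsim n\overline{F}(x)$; and conditionally on $\{X_{k}>x\}$ the remaining summands contribute $O_{P}(c_{n})=o(x)$, so $P(S_{n}>x)\backsim nP(S_{1}>x)$. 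Uniformity in the regime $x/c_{n}\rightarrow \infty $ is built in, since all error terms depend only on $n\overline{F}(x)\rightarrow 0$ and on truncated-moment bounds which are controlled by $\overline{F}\in RV(-\alpha )$.

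For (\ref{5.1}) I would refine the decomposition by conditioning on the position $k$ of the unique big jump. Writing $S_{n}=S_{k-1}+X_{k}+(S_{n}-S_{k})$, the main contribution to $\{S_{n}>x,\tau ^{-}>n\}$ comes from
\[
\{\tau ^{-}>k-1\}\cap \{X_{k}>x-\varepsilon c_{n}\}\cap \{\inf_{k<j\leq n}(S_{j}-S_{k})>-X_{k}-S_{k-1}\}.
\]
On the first two events $S_{k-1}+X_{k}$ lies at level $\gg c_{n}$, so the standard tightness bound $\sup_{j\leq n}|S_{j}-S_{k}|/c_{n}=O_{P}(1)$ forces the third event to have conditional probability $1-o(1)$ uniformly in $k$. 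Summing yields $\sum_{k=1}^{n}P(\tau ^{-}>k-1)P(X_{1}>x)(1+o(1))$. Because $P(\tau ^{-}>n)$ is regularly varying of index $-(1-\rho )$, Karamata's theorem gives $\sum_{k=0}^{n-1}P(\tau ^{-}>k)\backsim \rho ^{-1}nP(\tau ^{-}>n)$, and combined with (\ref{5.x}) this yields (\ref{5.1}).

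For the local statements (\ref{5.y}) and (\ref{5.3}) I would run the same programme, replacing the tail event $\{X_{k}>x\}$ by the slab $\{X_{k}\in \lbrack x,x+\Delta )\}$. The extra hypothesis (\ref{add}), that $f_{x}^{\Delta }$ is regularly varying in $x$, is precisely what is needed to carry inclusion--exclusion and the negligibility estimates through in local form: it delivers $P(\exists k\leq n:X_{k}\in \lbrack x,x+\Delta ))\backsim nf_{x}^{\Delta }$, and via a convolution-plus-regular-variation argument it rules out contributions from trajectories with two comparable jumps whose sum lands in the slab.

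The main obstacle will be controlling the negligible contributions \emph{uniformly} in $x/c_{n}\rightarrow \infty $. The delicate points are (a) ruling out the two-big-jumps scenario for the slab event, which requires an $RV(-\alpha )$-based convolution bound, and (b) showing that after the big jump the no-crossing event has conditional probability $1-o(1)$ uniformly in the jump time $k$, including the boundary case where $k$ is close to $n$ and the $O_{P}(c_{n})$ argument must be supplemented by a one-step tail estimate for the residual increments. Both ingredients are carefully developed in \cite{ej} and \cite{dds}, from which we simply quote the final conclusions.
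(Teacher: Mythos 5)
The paper does not actually prove Proposition~\ref{thirteen}: it treats it as a black box, quoting (\ref{5.1}) and (\ref{5.3}) verbatim from Propositions 11 and 12 of \cite{ej}, and deducing (\ref{5.x}) and (\ref{5.y}) from Theorem 2.1 of \cite{dds} (with special cases already in \cite{rad1}, \cite{rad2}, \cite{ej}). Your proposal ends by invoking exactly these same two sources, so in effect you land where the paper does; the one-big-jump sketch you supply (Fuk--Nagaev truncation for the unconditional tail, conditioning on the jump position $k$ together with $P(\tau^-{>}k-1)$ and a tightness bound on the post-jump path, then Karamata to get the factor $\rho^{-1}n P(\tau^-{>}n)$) is a faithful outline of the mechanism behind those cited results and is consistent with them, but is additional exposition rather than a competing proof.
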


\subsection{Some identities for stable processes.}

\begin{proposition}
\label{id}(i) For any stable process $Y$ which has $\alpha \rho <1$ there
are positive constants $k_{7}$ and $k_{8}$ such that the following
identities hold: 
\begin{eqnarray}
h_{x}(t) &=&k_{7}\int_{0}^{\infty }q_{x}(w)w^{-\alpha }dw\text{ and}
\label{riv} \\
q_{u}(v) &=&k_{8}\dint\limits_{0}^{1}\dint\limits_{0}^{u\wedge
v}u(t,u-z)u^{-}(1-t,v-z)dzdt,  \label{jb}
\end{eqnarray}%
where $u$ and $u^{-}$ denote the bivariate renewal densities for the
increasing ladder processes of $Y$ and $-Y.$

(ii) If $\alpha \rho =1$ there is a positive constant $k_{9}$ such that 
\begin{equation}
p(x)=k_{9}h_{x}(1).  \label{lc}
\end{equation}
\end{proposition}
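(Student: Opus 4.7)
I would treat the three identities separately. For (\ref{riv}), since $\alpha\rho<1$ forces $\alpha<2$ and $\Pi$ has density $c_+w^{-\alpha-1}$ on $(0,\infty)$, the process $Y$ can cross level $x$ only by a positive jump. By the L\'evy system / compensation formula for the jumps of $Y$ stopped at $\sigma_x$, evaluated at $t=1$,
\[
h_x(1)=\int_0^\infty q_x(w)\,\bar\Pi(w)\,dw=\frac{c_+}{\alpha}\int_0^\infty q_x(w)\,w^{-\alpha}\,dw,
\]
identifying $k_7=c_+/\alpha$; the general $t>0$ case follows by self-similarity of $Y$.

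For (\ref{jb}), the plan is to take a scaling limit of the discrete identity (\ref{main}). Fix $u,v>0$ and set $x=\lfloor uc_n\rfloor$, $y=\lfloor vc_n\rfloor$. Multiplying (\ref{main}) by $c_n$, the LHS converges to $q_u(v)$ by (\ref{C}). On the RHS, I would establish a bivariate local limit theorem extending (\ref{l1}) from time $n$ to an arbitrary intermediate time: for $r=\lfloor nt\rfloor$ and $z=\lfloor c_n\zeta\rfloor$,
\[
g(r,x-z)\sim \frac{P(\tau^->n)}{nc_n}\,u(t,u-\zeta)
\]
uniformly on compact subsets of $(0,1)\times(0,u)$ bounded away from the boundary, with $u(t,\cdot)$ as in the statement, and analogously for $g^-$. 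Such an extension follows from the methods of \cite{vw} combined with the regular variation of $P(\tau^->\cdot)$ and $c$. The Riemann sum $c_n\sum_{r,z}$ in (\ref{main}) then converges to the double integral on the RHS of (\ref{jb}), with $k_8$ fixed by the normalizations via Lemma \ref{five} and (\ref{x}). The technical core is making this Riemann-sum convergence uniform up to the boundaries $t\in\{0,1\}$ and $\zeta=0$, where $u$ and $u^-$ can blow up; these contributions can be controlled via the uniform upper bounds of Lemma \ref{three}.

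For (\ref{lc}), the spectrally negative case is structurally different: $Y$ has no positive jumps, so $Y_{\sigma_x}=x$ (creeping) and the ascending ladder height subordinator is a pure drift. I would exploit this via a Williams-type decomposition at the supremum of $Y$ on $[0,1]$: the pre-supremum portion reaches its maximum by creeping at time $1$, contributing a factor proportional to $h_y(1)\,dy$, while the post-supremum portion is an independent dual conditioned process; integrating out the latter yields $p(x)=k_9 h_x(1)$, with the constant encoding the ladder drift. Alternatively, (\ref{lc}) can be extracted by comparing the asymptotic $c_ng(n,x)/P(\tau^->n)\to p(x_n)$ from Lemma \ref{nine} with a local limit theorem for the limiting inverse local time at its deterministic argument.
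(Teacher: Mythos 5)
Your treatment of \eqref{riv} is a genuine and attractive alternative: the paper simply cites the forthcoming work \cite{dr}, whereas your compensation-formula argument is self-contained and correct. Since $\alpha\rho<1$ rules out both a Gaussian component and upward creeping, $Y$ crosses $x$ only by a positive jump, and applying the L\'{e}vy system to the previsible integrand $\mathbf{1}_{\{\bar Y_{s-}<x\}}$ gives exactly $h_x(1)=\int_0^\infty q_x(w)\bar\Pi(w)\,dw$ with $\bar\Pi(w)=(c_+/\alpha)w^{-\alpha}$. (Note, though, that $q_x$ is defined only at time $1$, so the displayed identity should be read with $h_x(1)$ on the left; the $h_x(t)$ in the paper is evidently a typo, and your appeal to self-similarity for general $t$ would require introducing $q_x^{(t)}$.)

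Your plan for \eqref{jb}, however, is circular as written. You invoke \eqref{C} to identify the scaling limit of the left side of \eqref{main} as $q_u(v)$, but in the paper \eqref{C} is itself proved \emph{by} computing the scaling limit of the right side of \eqref{main} and then appealing to \eqref{jb} to recognise that limit as $Cq_{x_n}(y_n)$; without \eqref{jb} that recognition is unavailable. To make your route work you would first need an independent proof of the conditional local limit theorem $c_nP(S_n=x-y,T_x>n)\to q_u(v)$, e.g.\ by upgrading the functional convergence in Proposition \ref{two}(ii) via a Stone-type equicontinuity argument together with the uniform bound of Corollary \ref{twelve}. The paper instead proves \eqref{jb} directly in the L\'{e}vy setting, by taking Laplace transforms in $t$, inserting an independent exponential time $\boldsymbol{e}_q$, and factoring $X_{\boldsymbol{e}_q}=S_{\boldsymbol{e}_q}-\tilde S^-_{\boldsymbol{e}_q}$ via Wiener--Hopf together with $\kappa(q,0)\kappa^-(q,0)=q/k_7$; this is a minor extension of Theorem VI.20 of \cite{jb} and entirely avoids the random walk.

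For \eqref{lc}, your Williams-type decomposition at the supremum is only a sketch and would need substantial work to make precise (in particular, making rigorous the ``pre-supremum portion reaches its maximum by creeping at time $1$'' contribution), while your alternative of comparing the local limit for $g(n,x)$ with one for the inverse local time is not really formed into an argument. The paper's route is much shorter: write the meander law as $p_t(dx)=n(\varepsilon_t\in dx)/n(\zeta>t)$ via excursion theory for $X-I$, use Corollary 4 of \cite{ac} to express $n(\varepsilon_t\in dx)$ in the spectrally negative case as $Cxt^{-1}P(X_t\in dx)$, and then use the L\'{e}vy ballot theorem (Corollary VII.3 of \cite{jb}) to identify this with $Ch_x(t)\,dx$, from which \eqref{lc} follows at $t=1$.
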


\begin{proof}
All three results are special cases of results for L\'{e}vy processes. The
general version of (\ref{riv}) is given in \cite{dr}, and (\ref{jb}) follows
from the following observation, which is a minor extension of Theorem 20,
p176 of \cite{jb}.\textbf{\ }

Assume $X$ is a L\'{e}vy process which is not compound Poisson. Then there
is a constant $k_{7}>0$ such that for $x>0$ and $w<x,$ 
\begin{equation}
P(X_{t}\in
dw,t<T_{x})dt=k_{7}\int_{y=w^{+}}^{x}\int_{s=0}^{t}U(ds,dy)U^{-}(dt-s,y-dw),
\label{D8}
\end{equation}%
where $U$ and $U^{-}$ are the bivariate renewal measures in the increasing
ladder processes of $X$ and $X^{-}.$ Clearly it suffices to prove that the
Laplace transform, in $t,$ of the LHS of (\ref{D8}) is the same as that of
the RHS, which is 
\begin{eqnarray}
&&k_{7}\text{ }\int_{y=w^{+}}^{x}\int_{t=0}^{\infty
}e^{-qt}\int_{s=0}^{t}U(ds,dy)U^{-}(dt-s,y-dw)  \notag \\
&=&k_{7}\text{ }\int_{y=w^{+}}^{x}\int_{t=0}^{\infty
}e^{-qt}U(dt,dy)\int_{s=0}^{\infty }e^{-qs}U^{-}(ds,y-dw).  \label{D7}
\end{eqnarray}%
Note that if we introduce an independent Exp($q$) random variable $%
\boldsymbol{e}_{q}$ the Wiener-Hopf factorisation allows us to write $X_{%
\boldsymbol{e}_{q}}=S_{\boldsymbol{e}_{q}}-\widetilde{S}_{\boldsymbol{e}%
_{q}}^{-},$ where $S$ denotes the supremum process of $X$ and $\tilde{S}^{-}$
denotes an independent copy of the supremum process $S^{-}$ of $-X.$ Let $%
\kappa $ and $\kappa ^{-}$ denote the bivariate Laplace exponents of the
ladder processes of $X$ and $X^{-}.$Then, using the identity $\kappa
(q,0)\kappa ^{-}(q,0)=q/k_{7}$ which follows from the Wiener-Hopf
factorisation, (see e.g. (3), p166 of \cite{jb}), 
\begin{eqnarray*}
\int_{t=0}^{\infty }e^{-qt}P(X_{t} &\in &dw,t<T_{x})dt \\
&=&q^{-1}P(X_{\boldsymbol{e}_{q}}\in dw,\boldsymbol{e}_{q}<T_{x}) \\
&=&q^{-1}P(S_{\boldsymbol{e}_{q}}\leq x,S_{\boldsymbol{e}_{q}}-\widetilde{S}%
_{\boldsymbol{e}_{q}}^{-}\in dw) \\
&=&q^{-1}\int_{w^{+}}^{x}P(S_{\boldsymbol{e}_{q}}\in dy)P(S_{\boldsymbol{e}%
_{q}}^{-}\in y-dw) \\
&=&k_{7}\int_{w^{+}}^{x}\frac{P(S_{\boldsymbol{e}_{q}}\in dy)}{\kappa (q,0)}%
\frac{P(S_{\boldsymbol{e}_{q}}^{-}\in y-dw)}{\kappa ^{-}(q,0)}.
\end{eqnarray*}%
But (1), p 163 of \cite{jb} gives%
\begin{equation*}
\frac{E\mathbb{(}e^{-\lambda S_{\boldsymbol{e}_{q}}})}{\kappa (q,0)}=\frac{1%
}{\kappa (q,\lambda )}=\int_{0}^{\infty }\int_{0}^{\infty }e^{-(qt+\lambda
y)}U(dt,dy),
\end{equation*}%
so 
\begin{equation*}
\frac{P(S_{\boldsymbol{e}_{q}}\in dy)}{\kappa (q,0)}=\int_{0}^{\infty
}e^{-qt}U(dt,dy).
\end{equation*}%
Using the analogous expression for $P(S_{\boldsymbol{e}_{q}}^{-}\in y-dw),$ (%
\ref{D7}) is immediate, and then (\ref{D8}) follows. Specializing this to
the stable case then gives (\ref{jb}). Finally, if we write $n$ for the
characteristic measure of the excursions away from zero of $X-I,$ with $I$
denoting the infimum process of $X$, then $p_{t}(dx):=n(\varepsilon _{t}\in
dx)/n(\zeta >t)$ is a probabilty measure which coincides with that of the
meander of length $t$ at time $t$ in the stable case. (Here $\zeta $ denotes
the life length of the generic excursion $\varepsilon .)$ In the special
case of spectrally negative L\'{e}vy processes, we have%
\begin{eqnarray*}
p_{t}(dx) &=&Cxt^{-1}P(X_{t}\in dx)/n(\zeta >t) \\
&=&Ch_{x}(t)dx/n(\zeta >t).
\end{eqnarray*}%
The first equality here comes from Cor 4 in \cite{ac}, and the second is the
L\'{e}vy version of the ballot theorem (see Corollary 3, p 190 of \cite{jb}%
). Specialising to the stable case and $t=1$ gives (\ref{lc}). (I owe this
observation to Loic Chaumont.)
\end{proof}

\section{\protect\bigskip Proof of Proposition \protect\ref{four}}

\begin{proof}
We will be applying the results in Lemma \ref{nine} to formula (\ref{main})
and we write the RHS of (\ref{main}) as $P_{1}+P_{2}+P_{3},$ where with $%
\delta \in (0,1/2),$ 
\begin{equation*}
P_{i}=\sum_{r\in A_{i}}\sum_{z=0}^{y\wedge x}g(r,x-z)g^{-}(n-r,y-z),\text{ }%
1\leq i\leq 3,
\end{equation*}%
and%
\begin{equation*}
A_{1}=\{0\leq r\leq \delta n\},\text{ }A_{2}=\{\delta n<r\leq (1-\delta )n\},%
\text{ and }A_{3}=\{(1-\delta )n<r\leq n\}.
\end{equation*}

(i) We introduce $d(\cdot ),$ an increasing and continuous function which
satisfies $d(n)=nc_{n},$ and with $\left\lfloor \cdot \right\rfloor $
standing for the integer part function, use Lemma \ref{nine} to write%
\begin{eqnarray*}
P_{1} &\sim &f(0)\sum_{z=0}^{y\wedge x}\sum_{r=0}^{\left\lfloor n\delta
\right\rfloor }g(r,x-z)\frac{V(y-z)}{d(n-r)} \\
&\leq &\frac{f(0)}{d(n(1-\delta ))}\sum_{z=0}^{y\wedge
x}V(y-z)\sum_{r=0}^{\left\lfloor n\delta \right\rfloor }g(r,x-z) \\
&=&\frac{f(0)}{d(n(1-\delta ))}\sum_{z=0}^{y\wedge
x}V(y-z)[u(x-z)-\sum_{r=\left\lfloor n\delta \right\rfloor +1}^{\infty
}g(r,x-z)].
\end{eqnarray*}%
We can apply Lemma \ref{nine} again to get the estimate, uniform for $%
w/c_{n}\rightarrow 0,$ 
\begin{equation*}
\sum_{r=\left\lfloor n\delta \right\rfloor +1}^{\infty }g(r,w)\sim
\sum_{r=\left\lfloor n\delta \right\rfloor +1}^{\infty }\frac{f(0)}{rc_{r}}%
U(w)\sim \frac{Cf(0)}{c_{n}}U(w).
\end{equation*}%
Since we know that 
\begin{equation*}
\lim \inf_{n\rightarrow \infty }\frac{nu(n)}{U(n)}>0,
\end{equation*}%
(see e.g. Theorem 8.7.4 in \cite{bgt}) we see that this is $o(u(w))$. Also
we have%
\begin{equation*}
\sum_{z=0}^{y\wedge x}\sum_{r=0}^{\left\lfloor n\delta \right\rfloor
}g(r,x-z)\frac{V(y-z)}{d(n-r)}\geq \frac{1}{d(n)}\sum_{z=0}^{y\wedge
x}\sum_{r=0}^{\left\lfloor n\delta \right\rfloor }g(r,x-z)V(y-z),
\end{equation*}%
so we see that 
\begin{equation}
\lim_{n,\delta }\frac{d(n)P_{1}}{f(0)\sum_{z=0}^{y\wedge x}V(y-z)u(x-z)}=1,
\label{p1}
\end{equation}%
where $\lim_{n,\delta }(\cdot )=1$ is shorthand for $\lim_{\delta
\rightarrow 0}\lim \sup_{n\rightarrow \infty }(\cdot )$ $=\lim_{\delta
\rightarrow 0}\lim \inf_{n\rightarrow \infty }(\cdot )=1.$ Similarly, once
we observe that 
\begin{eqnarray*}
P_{3} &=&\sum_{z=0}^{y\wedge x}\sum_{r=n-\left\lfloor n\delta \right\rfloor
}^{n}g(r,x-z)g^{-}(n-r,y-z) \\
&=&\sum_{z=0}^{y\wedge x}\sum_{r=0}^{\left\lfloor n\delta \right\rfloor
}g(n-r,x-z)g^{-}(r,y-z),
\end{eqnarray*}%
an entirely analogous argument gives%
\begin{equation}
\lim_{n,\delta }\frac{d(n)P_{3}}{f(0)\sum_{z=0}^{y\wedge x}U(x-z-1)v(y-z)}=1,
\label{p2}
\end{equation}%
where $v$ is the renewal mass function in the down-going ladder height
process. Noting that%
\begin{equation*}
U(x-z-1)v(y-z)+V(y-z)u(x-z)=U(x-z)V(y-z)-U(x-z-1)V(y-z-1),
\end{equation*}%
we get the formula%
\begin{equation*}
\sum_{z=0}^{y\wedge x}\{U(x-z-1)v(y-z)+V(y-z)u(x-z)\}=V(y)U(x),
\end{equation*}%
and the result will follow by letting $n\rightarrow \infty $ and then $%
\delta \downarrow 0$ provided $d(n)P_{2}=o(V(y)U(x))$ for each fixed $\delta
>0.$ In fact, using Lemma \ref{nine} again,%
\begin{eqnarray*}
d(n)P_{2} &=&d(n)\sum_{A_{2}}\sum_{z=0}^{y\wedge x}g(r,x-z)g^{-}(n-r,y-z) \\
&\sim &f(0)^{2}d(n)\sum_{z=0}^{y\wedge x}\sum_{A_{2}}\frac{V(y-z)U(x-z)}{%
d(r)d(n-r)} \\
&\leq &\frac{(y\wedge x)f(0)^{2}d(n)nV(y)U(x)}{d(\left\lfloor n\delta
\right\rfloor )^{2}}=O(\frac{(y\wedge x)V(y)U(x)}{c_{n}})=o(V(y)U(x)),
\end{eqnarray*}%
and the result follows.

(ii) In this case we can assume WLOG that $y\wedge x=x=o(y),$ so that Lemma %
\ref{nine} gives $g^{-}(n-r,y-z)\backsim P(\tau >n-r)\tilde{p}%
(y/c_{n-r})/c_{n-r}$ uniformly for $r\in A_{1}$and $0\leq z\leq x.$ With $e$
denoting a continuous and monotone interpolant of $c_{m}/P(\tau >m)$, and
noting that $\tilde{p}(\cdot )$ is uniformly continuous and bounded away
from zero and infinity on $[D^{-1},D],$ see \cite{ds}, we can use a similar
argument to that in (i) to show that 
\begin{eqnarray*}
P_{1} &=&\sum_{0}^{\left\lfloor \delta n\right\rfloor }\sum_{z=0}^{x}\frac{%
\tilde{p}(y/c_{n-r})}{e(n-r)}g(r,x-z)(1+o(1)) \\
&\leq &\frac{\tilde{p}^{\ast }(y)}{e(n(1-\delta ))}\left(
\sum_{z=0}^{x}u(x-z)(1+o(1))\right) , \\
&=&\frac{U(x)\tilde{p}^{\ast }(y)}{e(n(1-\delta ))}(1+o(1)),
\end{eqnarray*}%
where $\tilde{p}^{\ast }(y)=\sup_{0\leq r\leq \delta n}\tilde{p}(y/c_{n-r})=%
\tilde{p}(y_{n})+\varepsilon (n,\delta )$ and $\lim_{n,\delta }\varepsilon
(n,\delta )=0.$ \ In the same way we get $P_{1}\geq U(x)\tilde{p}_{\ast
}(y)/e(n)(1+o(1))$, where $\tilde{p}_{\ast }(y)=\inf_{0\leq r\leq \delta n}%
\tilde{p}(y/c_{n-r}),$ and we deduce that 
\begin{equation}
\lim_{n,\delta }\frac{e(n)P_{1}}{\tilde{p}(y_{n})U(x)}=1.  \label{p4}
\end{equation}%
Since $inf\{\tilde{p}(y):y\in \lbrack D^{-1},D]>0,$ the result will follow
if we can show that for any fixed $\delta >0$ 
\begin{equation}
\lim \sup_{n\rightarrow \infty }\frac{e(n)(P_{2}+P_{3})}{U(x)}=0.  \label{p3}
\end{equation}%
However (\ref{p2}) still holds, but note now that 
\begin{eqnarray*}
\sum_{z=0}^{y\wedge x}U(x-1-z)v(y-z) &=&\sum_{z=0}^{x}U(x-1-z)v(y-z) \\
&\leq &U(x)(V(y)-V(y-x-1))=o(U(x)V\left( y\right) ).
\end{eqnarray*}%
and since the analogue of (\ref{k}) holds, viz $V(c_{n})\backsim
k_{10}/P(\tau ^{-}>n),$ we see that%
\begin{eqnarray*}
\frac{e(n)P_{3}}{U(x)} &=&o\left( \frac{U(x)V\left( y\right) e(n)}{d(n)U(x)}%
\right) \\
&=&o(\frac{1}{nP(\tau >n)P(\tau ^{-}>n)})=o(1).
\end{eqnarray*}%
Finally%
\begin{eqnarray*}
e(n)P_{2} &=&e(n)\sum_{A_{2}}\sum_{z=0}^{x}g(r,x-z)g^{-}(n-r,y-z) \\
&\sim &f(0)e(n)\sum_{A_{2}}\sum_{z=0}^{x}\frac{U(x-z)\tilde{p}((y-z)/c_{n-r})%
}{d(r)e(n-r)} \\
&\leq &\frac{Ce(n)nxU(x)}{d(\left\lfloor n\delta \right\rfloor
)e(\left\lfloor n\delta \right\rfloor )}=O(\frac{xU(x)}{c_{n}})=o(U(x)).
\end{eqnarray*}%
Thus (\ref{p3}) is established, and the result (\ref{B}) follows. Since (\ref%
{D}) is (\ref{B}) for $-S$ with $x$ and $y$ interchanged, modified to take
account of the difference between strict and weak ladder epochs, we omit
it's proof.

(iii) In this case it is $P_{2}$ that dominates. To see this, note that if
we denote by $b(\cdot )$ a continuous interpolant of $c_{n}/P(\tau ^{-}>n),$
we have $b(n)e(n)\backsim nc_{n}^{2}.$ Then using Lemma \ref{nine} twice
gives, uniformly for $x_{n},y_{n}\in (D^{-1},D)$ and for any fixed $\delta
\in (0,1/2),$%
\begin{eqnarray*}
c_{n}P_{2} &=&c_{n}\sum_{A_{2}}\sum_{z=0}^{x\wedge y}\frac{p((x-z)/c_{r})%
\tilde{p}((y-z)/c_{n-r})}{b(r)e(n-r)}+o(c_{n}(x\wedge y)\sum_{A_{2}}\frac{1}{%
b(r)e(n-r)}) \\
&=&c_{n}\sum_{A_{2}}\sum_{z=0}^{x\wedge y}\frac{p((x-z)/c_{r})\tilde{p}%
((y-z)/c_{n-r})}{b(r)e(n-r)}+o(1).
\end{eqnarray*}%
Making the change of variables $r=nt,y=c_{n}z,$ recalling that $p$ and $%
\tilde{p}$ are uniformly continuous on compacts, and that%
\begin{equation*}
\frac{b(r)e(n-r)}{b(n)e(n)}\rightarrow t^{-\eta -1+\rho }(1-t)^{-1-\rho }
\end{equation*}%
uniformly on $A_{2},$ we see that for each fixed $\delta >0$ we get the
uniform estimate $c_{n}P_{2}=I_{\delta }(x_{n},y_{n})+o(1),$ where 
\begin{equation*}
I_{\delta }(u,v)=\dint\limits_{\delta }^{1-\delta }\dint\limits_{0}^{u\wedge
v}p(\frac{u-z}{t^{\eta }})\tilde{p}(\frac{v-z}{(1-t)^{\eta }})t^{-\eta
-1+\rho }(1-t)^{-\eta -\rho }dtdz.
\end{equation*}%
If we introduce $p_{t}(z)=t^{-\eta }p(zt^{-\eta }),$ which is the density
function of $Z_{t}$, the meander of length $t$ at time $t,$ according to
Lemma 8 of \cite{ds} we have that the renewal measure of the increasing
ladder process of $Y$ has a joint density which is given by $u(t,z)=Ct^{\rho
-1}p_{t}(z)=Ct^{-\eta +\rho -1}p(zt^{-\eta }).$ Similarly for the decreasing
ladder process \ we have $u^{-}(t,z)=C$ $t^{-\eta +\rho }\tilde{p}(zt^{-\eta
}),$ and (\ref{jb}) in Proposition \ref{id} gives 
\begin{equation*}
I_{0}(u,v)=C\dint\limits_{0}^{1}\dint\limits_{0}^{u\wedge
v}u(t,u-z)u^{-}(1-t,v-z)dzdt=Cq_{u}(v),
\end{equation*}%
so we conclude that%
\begin{equation}
\lim_{\delta \rightarrow 0}\lim_{n\rightarrow \infty }\frac{c_{n}P_{2}}{%
q_{x_{n}}(y_{n})}=C.  \label{p5}
\end{equation}%
Turning to $P_{1},$ if $K=\sup_{y\geq 0}\tilde{p}(y),$ we have%
\begin{eqnarray*}
c_{n}P_{1} &\backsim &c_{n}\sum_{0}^{\left\lfloor \delta n\right\rfloor
}\sum_{z=0}^{x\wedge y}\frac{\tilde{p}((y-z)/c_{n-r})}{e(n-r)}g(x-z,r) \\
&\leq &\frac{Kc_{n}}{e(n(1-\delta ))}\sum_{0}^{\left\lfloor \delta
n\right\rfloor }\sum_{z=0}^{x}g(x-z,r)\leq CP(\tau >n)\Gamma (\left\lfloor
\delta n\right\rfloor ),
\end{eqnarray*}%
where $\Gamma $ is the renewal function in the increasing \ ladder time
process. Since $T\in D(\rho ,1),$ we know that $\Gamma (\left\lfloor \delta
n\right\rfloor )\backsim \delta ^{\rho }\Gamma (n)$ and $P(\tau >n)\Gamma
(n)\rightarrow C,$ (see e.g. p 361 of \cite{bgt}), so we conclude that%
\begin{equation*}
\lim_{\delta \rightarrow 0}\lim \sup_{n\rightarrow \infty }c_{n}P_{1}=0.
\end{equation*}%
Exactly the same argument applies to $P_{3}$, and since $q_{u}(v)$ is
clearly bounded below by a positive constant for \thinspace $u,v\in \lbrack
D^{-1},D]$ we have shown that (\ref{C}) holds, except that the RHS is
multiplied by some constant $C.$ However if $C\neq 1$, by summing over $y$
we easily get a contradiction, and this finishes the proof.
\end{proof}

\section{Proof of Theorem \protect\ref{one}}

\subsection{Proof when $x/c_{n}\rightarrow 0.$}

\begin{proof}
As already indicated, the proof involves applying the estimates in
Proposition \ref{four} to the representation (\ref{i}), which we recall is%
\begin{equation}
P(T_{x}=n+1)=\sum_{y\geq 0}P(S_{n}=x-y,T_{x}>n)\overline{F}(y),\text{ }x\geq
0,n>0.  \label{j}
\end{equation}%
In the case $\alpha \rho <1$, given $\varepsilon >0$ we can find $%
K_{\varepsilon }$ and $n_{\varepsilon }$ such that $n\overline{F}%
(K_{\varepsilon }c_{n})\leq \varepsilon $ for \ $n\geq n_{\varepsilon }$
and, using (\ref{A}) from Proposition \ref{four}, 
\begin{equation}
P(S_{n}=x-y,T_{x}>n)\leq \frac{2U(x)f(0)V(y)}{nc_{n}}\text{ for \ all }x\vee
y\leq \varepsilon c_{n}\text{ and }n\geq n_{\varepsilon }.  \label{q1}
\end{equation}%
We can then use (\ref{B}) of Proposition \ref{four} to show that we can also
assume, increasing the value of $n_{\varepsilon }$ if necessary, that for $%
x\leq \varepsilon c_{n}$ and $y\in (\varepsilon c_{n},K_{\varepsilon
}c_{n}), $%
\begin{equation}
1-\varepsilon \leq \frac{c_{n}P(S_{n}=x-y,T_{x}>n)}{U(x)P(\tau >n)\tilde{p}%
(y_{n})}\leq 1+\varepsilon \text{ for all }n\geq n_{\varepsilon }.
\label{q2}
\end{equation}%
For fixed $\varepsilon $ it is clear that as $n\rightarrow \infty $ 
\begin{equation}
\sum_{\varepsilon c_{n}<y<K_{\varepsilon }c_{n}}\tilde{p}(y_{n})\overline{F}%
(y)/c_{n}\backsim \int_{\varepsilon }^{K_{\varepsilon }}\tilde{p}(z)%
\overline{F}(c_{n}z)dz\backsim n^{-1}\int_{\varepsilon }^{K_{\varepsilon }}%
\tilde{p}(z)z^{-\alpha }dz.  \label{q3}
\end{equation}%
Since it is known (see (109) in \cite{vw} or Proposition 10 in \cite{ds})
that $k_{11}:=\int_{0}^{\infty }z^{-\alpha }\tilde{p}(z)dz<\infty ,$ and we
can assume $K_{\varepsilon }\uparrow \infty $ as $\varepsilon \downarrow 0$
we see that, provided 
\begin{equation}
\lim_{\varepsilon \downarrow 0}\lim \sup_{n\rightarrow \infty }\frac{1}{%
U(x)P(\tau =n)}\sum_{y\in \lbrack 0,\varepsilon c_{n}]\cup \lbrack
K_{\varepsilon }c_{n},\infty )}P(S_{n}>x-y,T_{x}>n)\overline{F}(y)=0,
\label{q4}
\end{equation}%
it will follow from (\ref{q2}) that $P(T_{x}=n)\backsim U(x)k_{11}\rho
^{-1}P(\tau =n).$ Since this holds in particular for $x=0,$ we see that $%
k_{11}=\rho ,$ so it remains only to verify (\ref{q4}). Note first that%
\begin{eqnarray*}
\sum_{y\in \lbrack K_{\varepsilon }c_{n},\infty )}P(S_{n} &=&x-y,T_{x}>n)%
\overline{F}(y)\leq \overline{F}(K_{\varepsilon }c_{n})P(T_{x}>n) \\
&\leq &\varepsilon n^{-1}P(T_{x}>n),
\end{eqnarray*}%
so one part of (\ref{q4}) will follow if we can show that 
\begin{equation}
P(T_{x}>n)\leq CU(x)P(\tau >n).  \label{xx}
\end{equation}%
Obviously, for any $B>0$%
\begin{eqnarray*}
P(T_{x} &>&n)=\sum_{y\geq 0}P(S_{n}=x-y,T_{x}>n) \\
&\leq &\sum_{0\leq y\leq
Bc_{n}}P(S_{n}=x-y,T_{x}>n)+P(S_{n}<x-Bc_{n},T_{x}>n).
\end{eqnarray*}%
Since $x/c_{n}\rightarrow 0,$ we can use the invariance principle in
Proposition \ref{two} to fix $B$ large enough to ensure that $%
P(S_{n}<x-Bc_{n}|T_{x}>n)\leq 1/2$ for all sufficiently large $n.$ We can
also use Corollary \ref{twelve} to get%
\begin{eqnarray*}
\sum_{0\leq y\leq Bc_{n}}P(S_{n} &=&x-y,T_{x}>n)\leq \frac{CU(x)}{nc_{n}}%
\sum_{0\leq y\leq Bc_{n}}V(y) \\
&\leq &\frac{CU(x)Bc_{n}V(Bc_{n})}{nc_{n}}\leq \frac{CU(x)}{nP(\tau ^{-}>n)}
\\
&\leq &CU(x)P(\tau >n),
\end{eqnarray*}%
and thus (\ref{xx}) is established. (Note that this proof is also valid for
the case $\alpha \rho =1.$) Now (\ref{q1}) gives%
\begin{eqnarray*}
\lim_{\varepsilon \downarrow 0}\lim \sup_{n\rightarrow \infty } &&\frac{1}{%
U(x)P(\tau =n)}\sum_{y\in \lbrack 0,\varepsilon c_{n}]}P(S_{n}>x-y,T_{x}>n)%
\overline{F}(y) \\
&\leq &\lim_{\varepsilon \downarrow 0}\lim \sup_{n\rightarrow \infty }\frac{%
2f(0)}{nc_{n}P(\tau =n)}\sum_{y\in \lbrack 0,\varepsilon c_{n}]}\overline{F}%
(y)V(y),
\end{eqnarray*}%
and since $\alpha \rho <1,$ (105) of \cite{vw} shows that this is zero.

In the case $\alpha \rho =1,$ a different proof is required. We make use of
the observation in \cite{vw} that there is a sequence $\delta _{n}\downarrow
0$ with $\delta _{n}c_{n}\rightarrow \infty $ and such that $n\overline{F}%
(\delta _{n}c_{n})\rightarrow 0.$ This gives 
\begin{eqnarray*}
\sum_{y>\delta _{n}c_{n}}P(S_{n} &=&x-y,T_{x}>n)\overline{F}(y)\leq
P(T_{x}>n)\overline{F}(\delta _{n}c_{n}) \\
&=&o(U(x)P(\tau >n)/n)=o(U(x)P(\tau =n)),
\end{eqnarray*}%
where we have used (\ref{xx}). Using (\ref{A}) of Proposition \ref{four}
gives \ 
\begin{equation*}
\sum_{y\leq \delta _{n}c_{n}}P(S_{n}=x-y,T_{x}>n)\overline{F}(y)\backsim 
\frac{f(0)U(x)\omega (n)}{nc_{n}}\text{ as }n\rightarrow \infty ,
\end{equation*}%
uniformly in $x$, where $\omega (n)=\sum_{0\leq y\leq \delta _{n}c_{n}}V(y)%
\overline{F}(y).$ But in \cite{vw}, it is shown that $nc_{n}P(\tau
=n)\backsim f(0)\omega (n)$, so we deduce from (\ref{j}) that $%
P(T_{x}=n+1)\backsim U(x)P(\tau =n),$ as required.
\end{proof}

\bigskip

\subsection{Proof when $x/c_{n}=O(1)$}

\begin{proof}
Again we treat the case $\alpha \rho <1$ first$,$ and start by noting that
for any $B>0,$%
\begin{equation*}
\sum_{y>Bc_{n}}P(S_{n}=x-y,T_{x}>n)\overline{F}(y)\leq \overline{F}%
(Bc_{n})\backsim \frac{1}{nB^{\alpha }}\text{ as }n\rightarrow \infty ,
\end{equation*}%
uniformly in $x\geq 0.$ Also by Corollary \ref{twelve}, for $b>0$, 
\begin{equation}
\sum_{y\leq bc_{n}}P(S_{n}=x-y,T_{x}>n)\overline{F}(y)\leq \frac{%
CU(x)\sum_{y\leq bc_{n}}V(y)\overline{F}(y)}{nc_{n}}.  \label{q6}
\end{equation}%
Since $V\overline{F}\in RV(-\alpha \rho ),$ $\sum_{y\leq z}V(y)\overline{F}%
(y)\backsim zV(z)\overline{F}(z)/(1-\alpha \rho ),$ and we see that when $%
x\leq Dc_{n},$ 
\begin{eqnarray*}
\frac{U(x)\sum_{y\leq bc_{n}}V(y)\overline{F}(y)}{c_{n}} &\backsim
&b^{1-\alpha \rho }U(x)V(c_{n})\overline{F}(c_{n}) \\
&\leq &Cb^{1-\alpha \rho }D^{\alpha \rho }U(c_{n})V(c_{n})/n\leq
Cb^{1-\alpha \rho }D^{\alpha \rho },
\end{eqnarray*}%
where we have used (\ref{l}) from Corollary \ref{extra}. We conclude the
proof by showing that%
\begin{equation}
\lim_{b\downarrow 0,\text{ }B\uparrow \infty }\lim_{n\rightarrow \infty }%
\frac{n}{h_{x_{n}}(1)}\sum_{bc_{n}<y\leq Bc_{n}}P(S_{n}=x-y,T_{x}>n)%
\overline{F}(y)=C,  \label{q7}
\end{equation}%
uniformly for $x_{n}\in \lbrack D^{-1},D],$ since this would contradict (\ref%
{0.1}) if $C$ differed from 1. But in fact (\ref{q7}) follows immediately
from (\ref{C}) and the identity (\ref{riv}) in Proposition \ref{id}. When $%
\alpha \rho =1,$ it is immediate that%
\begin{equation*}
n\sum_{y>\delta _{n}c_{n}}P(S_{n}=x-y,T_{x}>n)\overline{F}(y)\leq n\overline{%
F}(\delta _{n}c_{n})\rightarrow 0,
\end{equation*}%
and%
\begin{eqnarray*}
&&n\sum_{0\leq y\leq \delta _{n}c_{n}}P(S_{n}=x-y,T_{x}>n)\overline{F}(y) \\
&\backsim &\frac{nP(\tau ^{-}>n)p(x_{n})}{c_{n}}\sum_{0\leq y\leq \delta
_{n}c_{n}}V(y)\overline{F}(y) \\
&\backsim &\frac{np(x_{n})\omega (n)}{P(\tau >n)nc_{n}}\backsim \frac{%
p(x_{n})P(\tau =n)}{P(\tau >n)}\backsim \rho p(x_{n}),
\end{eqnarray*}%
and the result follows from the identity (\ref{lc}) in Proposition \ref{id},
since again there would be a contradiction if $\rho k_{9}\neq 1$.
\end{proof}

\subsection{Proof when $x/c_{n}\rightarrow \infty $}

\begin{proof}
This time we write $P(T_{x}=n+1)=\sum_{1}^{4}P^{(i)},$ where $P^{(i)}=$%
\newline
$P\{A^{(i)}\cap (T_{x}=n+1)\}$ and 
\begin{eqnarray*}
A^{(1)} &=&(S_{n}\leq \delta x),\text{ }A^{(2)}=(\delta x<S_{n}\leq
x-Kc_{n}),\text{ } \\
A^{(3)} &=&(x-Kc_{n}<S_{n}\leq x-\gamma c_{n}),\text{ and }%
A^{(4)}=(S_{n}>x-\gamma c_{n}).
\end{eqnarray*}%
We note first that for $\delta \in (0,1),$ 
\begin{eqnarray}
\lim \sup_{n\rightarrow \infty }\frac{P^{(1)}}{\overline{F}(x)} &\leq &\lim
\sup_{n\rightarrow \infty }\frac{P(S_{n}\leq \delta x)\overline{F}((1-\delta
)x)}{\overline{F}(x)}=(1-\delta )^{-\alpha },  \notag \\
\lim \inf_{n\rightarrow \infty }\frac{P^{(1)}}{\overline{F}(x)} &\geq &\lim
\inf_{n\rightarrow \infty }\frac{P(\max_{r\leq n}S_{r}\leq x,|S_{n}|\leq
\delta x)\overline{F}((1+\delta )x)}{\overline{F}(x)}  \notag \\
&=&(1+\delta )^{-\alpha }.  \label{5.4}
\end{eqnarray}%
Next, using (\ref{5.x}), 
\begin{eqnarray}
\lim \sup_{n\rightarrow \infty }\frac{P^{(2)}}{\overline{F}(x)} &\leq &\lim
\sup_{n\rightarrow \infty }\frac{P(S_{n}>\delta x)\overline{F}(Kc_{n})}{%
\overline{F}(x)}  \notag \\
&\leq &\lim \sup_{n\rightarrow \infty }\frac{n\overline{F}(\delta x)%
\overline{F}(c_{n})}{\overline{F}(x)K^{\alpha }}=\frac{C}{(\delta K)^{\alpha
}}.  \label{5.5}
\end{eqnarray}%
To deal with the next term, we use (\ref{5.x}) again to see that for any
fixed $K,$ $P(x-Kc_{n}<S_{n}\leq x)$ is uniformly $o(n\overline{F}(x)).$
Since $P^{(3)}\leq \overline{F}(\gamma c_{n})P(x-Kc_{n}<S_{n}\leq x),$ we
deduce that%
\begin{equation}
\frac{P^{(3)}}{\overline{F}(x)}\rightarrow 0\text{ uniformly for each fixed }%
\gamma \text{ and }K.  \label{5.6}
\end{equation}%
As we are in the lattice case, (\ref{add}) tells us that $%
f(x):=P(S_{1}=x)\backsim \alpha x^{-1}\overline{F}(x),$ and combining this
with (\ref{5.3}) gives $g(n,x)=P(S_{n}=x,\tau ^{-}>n)\backsim $\newline
$\alpha nP(\tau ^{-}>n)x^{-1}\overline{F}(x),$ so we can assume that%
\begin{equation}
\sup_{n>0,\text{ }x>Kc_{n}}\frac{xg(n,x)P(\tau >n)}{\overline{F}(x)}<\infty .
\label{bound}
\end{equation}%
Then for large enough $n$ and $x>Kc_{n}$ 
\begin{eqnarray*}
P^{(4)} &=&\sum_{0}^{n}\sum_{y=0}^{\left\lfloor \gamma c_{n}\right\rfloor
}\sum_{z=0}^{\left\lfloor \gamma c_{n}\right\rfloor -y}g(n-m,x-y)g^{-}(m,z)%
\overline{F}(y+z) \\
&\leq &\frac{C\overline{F}(x)}{xP(\tau >n)}\sum_{0}^{n}\sum_{y=0}^{\left%
\lfloor \gamma c_{n}\right\rfloor }\sum_{z=0}^{\left\lfloor \gamma
c_{n}\right\rfloor -y}g^{-}(m,z)\overline{F}(y+z) \\
&\leq &\frac{C\overline{F}(x)}{xP(\tau >n)}\sum_{y=0}^{\left\lfloor \gamma
c_{n}\right\rfloor }\sum_{z=0}^{\left\lfloor \gamma c_{n}\right\rfloor
-y}v(z)\overline{F}(y+z) \\
&=&\frac{C\overline{F}(x)}{xP(\tau >n)}\sum_{z=0}^{\left\lfloor \gamma
c_{n}\right\rfloor }\sum_{y=0}^{\left\lfloor \gamma c_{n}\right\rfloor
-z}v(z)\overline{F}(y+z) \\
&=&\frac{C\overline{F}(x)}{xP(\tau >n)}\sum_{z=0}^{\left\lfloor \gamma
c_{n}\right\rfloor }\sum_{w=z}^{\left\lfloor \gamma c_{n}\right\rfloor }v(z)%
\overline{F}(w).
\end{eqnarray*}%
Now a summation by parts and the fact that $V\overline{F}\in RV(-\alpha \rho
)$ shows that as $y\rightarrow \infty $%
\begin{equation*}
\sum_{z=0}^{y}\sum_{w=z}^{y}v(z)\overline{F}(w)\backsim \sum_{z=0}^{y}V(z)%
\overline{F}(z)\backsim yV(y)\overline{F}(y)/(1-\alpha \rho ).
\end{equation*}%
So for all large enough $n$ we have the bound%
\begin{eqnarray}
P^{(4)} &\leq &\frac{C\overline{F}(x)\gamma ^{1-\alpha \rho }c_{n}V(c_{n})}{%
nxP(\tau >n)}  \notag \\
&\leq &\frac{C\overline{F}(x)\gamma ^{1-\alpha \rho }c_{n}}{nxP(\tau
>n)P(\tau ^{-}>n)}\backsim \frac{C\gamma ^{1-\alpha \rho }c_{n}}{x}\cdot 
\overline{F}(x)  \label{5.7}
\end{eqnarray}%
The result follows from (\ref{5.4})-(\ref{5.7}) and appropriate choice of $%
\delta ,K,$ and $\gamma .$
\end{proof}

\begin{remark}
The assumption (\ref{add}) is not strictly necessary for (\ref{bound}) to
hold, and this is the only point where we use this assumption. In fact, if
the following slightly weaker version of (\ref{bound}),%
\begin{equation}
\sup_{n>0,\text{ }x>Kc_{n}}\frac{c_{n}g(n,x)P(\tau >n)}{\overline{F}(x)}%
<\infty ,  \label{wbound}
\end{equation}%
were to hold, then (\ref{5.7}) would hold with $c_{n}$ replacing $x$ in the
denominator, and the proof would still be valid, by choosing $\gamma $ small.
\end{remark}

\section{The non-lattice case}

We indicate here the main differences between the proof in the lattice and
non-lattice cases. First, we have%
\begin{eqnarray*}
G(n,dy) &:&=\sum_{r=0}^{\infty }P(T_{r}=n,H_{r}\in dy)=P(S_{n}\in dy,\tau
^{-}>n), \\
G^{-}(n,dy) &:&=\sum_{r=0}^{\infty }P(T_{r}^{-}=n,H_{r}^{-}\in
dy)=P(-S_{n}\in dy,\tau >n),
\end{eqnarray*}%
and the following analogue of Lemma \ref{nine} is given in Theorems 3 and 4
of \cite{vw}.

\begin{lemma}
\label{fourteen}For any $\Delta _{0}>0,$ uniformly in $x\geq 0$ and $%
0<\Delta \leq \Delta _{0},$%
\begin{equation}
\frac{c_{n}G(n,[x,x+\Delta ])}{P(\tau ^{-}>n)}=\Delta p(x/c_{n})+o(1)\text{
and }\frac{c_{n}G^{-}(n,[x,x+\Delta ])}{P(\tau >n)}=\Delta \tilde{p}%
(x/c_{n})+o(1)\text{ as }n\rightarrow \infty .  \label{6.1}
\end{equation}%
Also, uniformly as $x/c_{n}\rightarrow 0,$%
\begin{equation}
G(n,[x,x+\Delta ])\backsim \frac{f(0)\int_{x}^{x+\Delta }U(w)dw}{nc_{n}}%
\text{ and }G^{-}(n,[x,x+\Delta ])\backsim \frac{f(0)\int_{x}^{x+\Delta
}V(w)dw}{nc_{n}}.  \label{6.2}
\end{equation}
\end{lemma}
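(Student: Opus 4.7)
The plan is to invoke Theorems 3 and 4 of \cite{vw} as a black box and merely repackage them in the form stated, exactly as the lattice statement (Lemma \ref{nine}) repackages results from that same source. The two displayed asymptotics are nothing other than the non-lattice analogues of (\ref{l1}) and (\ref{l2}), with point masses $g(n,x)$ and $g^-(n,x)$ replaced by the interval probabilities $G(n,[x,x+\Delta])$ and $G^-(n,[x,x+\Delta])$, and with the renewal mass functions $u,v$ replaced by the increments $\int_x^{x+\Delta} U(w)\,dw,\ \int_x^{x+\Delta} V(w)\,dw$ of the renewal functions themselves.

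For (\ref{6.1}), I would use the Sparre Andersen / cyclic-lemma identity $G(n,dy)=P(S_n\in dy,\tau^->n)$ and the Wiener--Hopf factorisation in the Fourier domain: the characteristic function of $S_n$ restricted to $\{\tau^->n\}$ factors through that of the strict ascending ladder process, and applying Stone's local limit theorem (the non-lattice Gnedenko LLT) to the unrestricted walk yields a uniform conditional local limit theorem with density $p(\cdot)$, the meander density of $Y$ at time $1$. Uniformity in $x\geq 0$ is achieved by combining the continuity and boundedness of $p$ on $[0,\infty)$ (see \cite{ds}) with a tail bound controlling the contribution from $x/c_n$ large, which in turn comes from the elementary inequality $G(n,[x,x+\Delta])\leq G(n,[x,\infty))\leq P(S_n>x)$ together with the asymptotic for $P(\tau^->n)$.

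For the refinement (\ref{6.2}) when $x/c_n\to 0$, the value $p(x/c_n)$ degenerates, and one must sharpen by decomposing at the last strict ascending ladder epoch. Writing
\[
G(n,[x,x+\Delta])=\sum_{k=0}^{n}\int_{[0,x+\Delta]} P(\tau_k=n-k',\ H_k\in dw)\cdot P(S_{k'}\in [x-w,x-w+\Delta],\tau^-\geq k')
\]
(or the dual decomposition at the first ladder epoch), the dominant contribution is $f(0)\int_x^{x+\Delta}U(w)\,dw/(nc_n)$, which arises by applying the unconditional LLT $P(S_m\in dz)\sim f(0)dz/c_m$ at $z=0$ and integrating against the bivariate renewal measure of $(\boldsymbol{\tau},\boldsymbol{H})$; uniformity as $x/c_n\to 0$ and $0<\Delta\leq\Delta_0$ follows from the monotonicity and slow variation of $U$ together with a standard Gnedenko remainder estimate. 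The two statements for $G^-$ are obtained by applying the same arguments to $-S$ and then passing from strict to weak descending ladder epochs via the identity on p.~100 of \cite{ad2}, precisely as in the lattice case.

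The main obstacle is not the pointwise limit but the uniformity in $x\geq 0$ in (\ref{6.1}): this requires genuinely uniform control of the Fourier inversion and of the convergence of the ratio $c_nG(n,dy)/P(\tau^->n)$ to a density, smoothly bridging the regime $x/c_n\to 0$ (where $p(x/c_n)\to p(0+)$ but the sharper expression (\ref{6.2}) is available) and the regime where $x/c_n$ is bounded away from $0$. This uniformity is exactly what Theorems 3 and 4 of \cite{vw} were designed to provide, and invoking them closes the argument.
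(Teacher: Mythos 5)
Your proposal is essentially the paper's own argument: the paper proves Lemma~\ref{fourteen} by citing Theorems 3 and 4 of \cite{vw}, deriving the $G^-$ statements by applying those results to $-S$ and passing from strict to weak ladder variables via the calculation on p.~100 of \cite{ad2}, and noting that the integrand $U(w-)$ in \cite{vw} can be replaced by $U(w)$. The one point you gloss over slightly is that the uniformity in $\Delta\in(0,\Delta_0]$ is \emph{not} actually stated in \cite{vw}; the paper flags this explicitly and justifies it by a ``perusal of the proof'' resting on the uniformity already present in Stone's local limit theorem (Theorem 8.4.2 of \cite{bgt}), which is in the same spirit as the Gnedenko remainder estimate you invoke but deserves to be called out as an observation rather than an automatic consequence of the citation.
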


\begin{remark}
Again, only the results for $G$ are given in \cite{vw}, but it is easy to
get the reults for $G^{-}.$ Actually the result in \cite{vw} has $U(w-)$
rather than $U(w)$ in (\ref{6.2}), but clearly the two integrals coincide.
Finally the uniformity in $\Delta $ is not mentioned in \cite{vw}, but a
perusal of the proof shows that this is true, essentially because it holds
in Stone's local limit theorem. See e.g. Theorem 8.4.2 in \cite{bgt}.
\end{remark}

In writing down the analogues of (\ref{i}) and (\ref{main}) care is required
with the the limits of integration, since the distribution of $S_{n}$ and
the renewal measures are not necessarily diffuse. These analogues are%
\begin{equation}
P(T_{x}=n+1)=\int_{[0,\infty )}P(S_{n}\in x-dy,T_{x}>n)\overline{F}(y),
\label{6.3}
\end{equation}%
and for $w\geq 0$%
\begin{equation}
P(S_{n}\in x-dw,T_{x}>n)=\sum_{r=0}^{n}\int_{[0,x)\cap \lbrack
0,w]}G(r,x-dz)G^{-}(n-r,dw-z).  \label{6.9}
\end{equation}%
The key result, the analogue of Proposition \ref{four}, is

\begin{proposition}
\label{fifteen}Fix $\Delta >0.$ Then (i) uniformly as $x_{n}\vee
y_{n}\rightarrow 0,$%
\begin{equation}
P(S_{n}\in (x-y-\Delta ,x-y],T_{x}>n)\backsim \frac{U(x)f(0)\int_{y}^{y+%
\Delta }V(w)dw}{nc_{n}}.  \label{1}
\end{equation}%
(ii) For any $D>1,$ uniformly for $y_{n}\in \lbrack D^{-1},D],$ 
\begin{equation}
P(S_{n}\in (x-y-\Delta ,x-y],T_{x}>n)\backsim \frac{U(x)P(\tau >n)\Delta 
\tilde{p}(y_{n})}{c_{n}}\text{ as }n\rightarrow \infty \text{ and }%
x_{n}\rightarrow 0,  \label{2}
\end{equation}%
and uniformly for $x_{n}\in \lbrack D^{-1},D],$ 
\begin{equation}
P(S_{n}\in (x-y-\Delta ,x-y],T_{x}>n)\backsim \frac{V(y)P(\tau ^{-}>n)\Delta
p(x_{n})}{c_{n}}\text{ as }n\rightarrow \infty \text{ and }y_{n}\rightarrow
0.  \label{3}
\end{equation}%
(iii) For any $D>1,$ uniformly for $x_{n}\in \lbrack D^{-1},D]$ and $%
y_{n}\in \lbrack D^{-1},D],$%
\begin{equation}
P(S_{n}\in (x-y-\Delta ,x-y],T_{x}>n)\backsim \frac{\Delta q_{x_{n}}(y_{n})}{%
c_{n}}\text{ as }n\rightarrow \infty .  \label{4}
\end{equation}
\end{proposition}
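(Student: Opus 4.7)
The plan is to mirror the three-region argument used in the proof of Proposition \ref{four}, replacing the point masses $g(r,\cdot), g^-(r,\cdot)$ by the measures $G(r,\cdot), G^-(r,\cdot)$, and the pointwise asymptotics of Lemma \ref{nine} by the interval asymptotics of Lemma \ref{fourteen}. Starting from the non-lattice representation (\ref{6.9}), I would integrate over $w \in (y, y+\Delta]$ to write
\begin{equation*}
P(S_n \in (x-y-\Delta, x-y], T_x > n) = \sum_{r=0}^n \int_{[0,x)} G(r, x-dz)\, G^{-}(n-r, [(y-z)\vee 0, y-z+\Delta]),
\end{equation*}
and then split the outer sum as $P_1+P_2+P_3$ using $A_1 = [0,\delta n]$, $A_2 = (\delta n, (1-\delta)n]$, $A_3 = ((1-\delta)n, n]$ exactly as before, with $\delta \to 0$ taken after $n\to\infty$.

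For part (i), on $A_1$ one has $n-r \asymp n$ so $G^{-}(n-r, [y-z, y-z+\Delta]) \sim f(0)\int_{y-z}^{y-z+\Delta} V(w)\,dw/(nc_n)$ uniformly in $z \in [0, y\wedge x]$ by (\ref{6.2}), and the inner $z$-integral against $G(r, x - dz)$ reduces via Fubini and dominated convergence to the renewal measure of $G$ integrated up to $r = \delta n$, which converges to the quantity $U(x)$; symmetrically for $P_3$. The middle-range estimate for $P_2$ from the lattice proof carries over verbatim, since the bounds in Corollary \ref{twelve} hold analogously for $G, G^-$ on intervals of length $\Delta$. Summing the boundary contributions and using the continuous analogue of the telescoping identity
\begin{equation*}
U(x)\!\int_y^{y+\Delta} V(w)\,dw = \int_{z=0}^{y\wedge x}\!\int_{y}^{y+\Delta}\!\bigl[V(w-z)\,U(dz) + U(x-z-)\,V(dw-z)\bigr]
\end{equation*}
(or equivalently, just extracting the leading factor $U(x)\int_y^{y+\Delta}V$), one obtains (\ref{1}).

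For part (ii), when $y_n \in [D^{-1},D]$ the factor $G^{-}(n-r, [y-z, y-z+\Delta]) \sim \Delta \tilde p(y_n)P(\tau>n)/c_n$ is the uniform estimate from (\ref{6.1}), while $G(r, \cdot)$ is summed as in part (i), and so $P_1$ is asymptotic to the RHS of (\ref{2}); the bounds for $P_2, P_3$ from the lattice argument (which used $y/c_n$ tending away from $0$ by combining the bound on $V$ near $y$ with Corollary \ref{extra}) transfer without change. The symmetric case (\ref{3}) follows by reversing the roles of $A_1$ and $A_3$. For part (iii), both factors are estimated by (\ref{6.1}), yielding $c_n P_2 \to \Delta\, I_0(x_n, y_n)$ after the change of variables $r = nt$, and (\ref{jb}) identifies this with $\Delta\, q_{x_n}(y_n)$ up to the same constant, which is then pinned to $1$ by the same summation-consistency argument as in the lattice case; $P_1$ and $P_3$ are $o(c_n^{-1})$ by the renewal function bound used there.

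The main obstacle is verifying uniformity in the length $\Delta$ (bounded above by $\Delta_0$, and allowed to shrink in the applications) when converting between the pointwise arguments for $g, g^-$ and the interval arguments for $G, G^-$: this is precisely where the uniformity in $\Delta$ asserted in Lemma \ref{fourteen} (inherited from Stone's local limit theorem) is essential, particularly in the near-boundary integrals where the intervals $[y-z, y-z+\Delta]$ may overlap the origin and one must truncate to $[0, y-z+\Delta]$ without losing the $\Delta$-linear main term. Once this is handled, the rest of the argument is a faithful transcription of the lattice proof.
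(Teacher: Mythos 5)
Your plan is indeed the one the paper follows: split the representation \eqref{6.9} into $P_1+P_2+P_3$ over the ranges $A_1,A_2,A_3$, feed in the interval asymptotics of Lemma \ref{fourteen} in place of the pointwise ones from Lemma \ref{nine}, and close with a continuous telescoping identity. However, two of your steps gloss over genuine difficulties that the paper handles explicitly.

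The most significant is your claim that $\tilde P_3$ follows ``symmetrically'' from $\tilde P_1$. It does not, because the representation \eqref{6.9} is not symmetric in $G$ and $G^{-}$: the outer measure $G(r,x-dz)$ is integrated in $z$, while $G^{-}(n-r,\cdot)$ is evaluated on the translated interval $[(y-z)^{+},y+\Delta-z)$. For $\tilde P_1$ (small $r$, so large $n-r$), you apply the interval asymptotics \eqref{6.2} to the \emph{second} factor, which is exactly what that estimate gives. For $\tilde P_3$ (large $r$, so small $n-r$ on the $G^{-}$ side and large on the $G$ side after re-indexing), you need the asymptotics of the measure $G(n-r,x-dz)$ while integrating a function of $z$ that itself involves a possibly non-diffuse renewal measure through $G^{-}(r,\cdot)$. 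You cannot simply replace $G(n-r,x-dz)$ by the density $f(0)U(x-z)\,dz/d(n-r)$ because the estimate \eqref{6.2} is for intervals, not infinitesimal masses. The paper resolves this by a change of variable and then subdividing the range of $w$-integration into subintervals of length $\varepsilon\ll\Delta$, applying \eqref{6.2} to each, and letting $\varepsilon\to 0$ at the end; this gives the Stieltjes-integral limit \eqref{6.6}, which is a different form from \eqref{6.5}. Your proposal does not contain this idea and would stall at this point.

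Secondly, the telescoping identity you invoke is not the one that actually applies. The correct identity (proved as a lemma in the paper, \eqref{6.12}) integrates over $z\in[0,x\wedge(y+\Delta))$ with the inner $w$-range $[y\vee z,y+\Delta)$, and critically carries an extra boundary term
\begin{equation*}
\boldsymbol{1}_{\{x\leq y+\Delta\}}\int_{(y-x)^{+}}^{y+\Delta-x}V(w)\,dw,
\end{equation*}
which is precisely the contribution from the separated $r=0$ term $G^{-}(n,[\{y-x\}^{+},y+\Delta-x))$ in the decomposition \eqref{6.10}. Your version of the identity is stated over $z\in[0,y\wedge x]$, $w\in[y,y+\Delta]$ with no boundary term, which agrees with the paper's only when $y\geq x$ and omits the care required when $y<x$. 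Since part (i) allows both $x_n\to 0$ and $y_n\to 0$ simultaneously, the case $y<x$ cannot be avoided, so this needs to be fixed. You also need to separate off the $r=0$ term from the sum at the outset, as the paper does in \eqref{6.10}, to make this accounting work. Once these two points are addressed, the rest of your outline — including the remarks about uniformity in $\Delta$, the reduction of parts (ii) and (iii) to the corresponding lattice estimates, and the identification of the limiting constant via the consistency argument — does match the paper's intentions.
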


Once we have these results, we deduce Theorem \ref{one} by applying them to
a modified version of (\ref{6.3}), viz%
\begin{eqnarray*}
\sum_{0}^{\infty }P(S_{n} &\in &(x-(n+1)\Delta ,x-n\Delta ],T_{x}>n)%
\overline{F}(n\Delta )\geq P(T_{x}=n+1) \\
&\geq &\sum_{0}^{\infty }P(S_{n}\in (x-(n+1)\Delta ,x-n\Delta ],T_{x}>n)%
\overline{F}((n+1)\Delta ),
\end{eqnarray*}%
and letting $\Delta \rightarrow 0.$ So the key step is establishing
Proposition \ref{fifteen}, and we illustrate how this can be done by proving
(\ref{1}).

\begin{proof}
We want to apply Lemma \ref{fourteen} to (\ref{6.9}), but technically the
problem is that we can't do this directly, as we did in the lattice case.
The first step is to get an integrated form of (\ref{6.9}), and it is useful
to separate off the term $r=0$ $,$ so that\ for $x,y\geq 0,$%
\begin{equation}
P(S_{n}\in (x-y-\Delta ,x-y],T_{x}>n)=G^{-}(n,[\{y-x\}^{+},y+\Delta -x))%
\boldsymbol{1}_{\{x\leq y+\Delta \}}+\tilde{P},  \label{6.10}
\end{equation}%
where 
\begin{eqnarray}
\tilde{P} &=&\sum_{r=1}^{n}\int_{y\leq w\,<y+\Delta }\int_{z\in \lbrack
0,x)\cap \lbrack 0,w]}G(r,x-dz)G^{-}(n-r,dw-z),  \notag \\
&=&\sum_{r=1}^{n}\int_{0\leq z\,<x\wedge (y+\Delta )}\int_{y\vee z\leq
w<(y+\Delta )}G(r,x-dz)G^{-}(n-r,dw-z)  \notag \\
&=&\sum_{r=1}^{n}\int_{0\leq z\,<x\wedge (y+\Delta
)}G(r,x-dz)G^{-}(n-r,[(y-z)^{+},y+\Delta -z)),  \label{6.11}
\end{eqnarray}%
Using\ a similar notation as in the proof of Proposition \ref{four} we split 
$\tilde{P}$ into three terms, and note first from (\ref{6.2}) and (\ref{6.11}%
) that 
\begin{eqnarray*}
\tilde{P}_{1} &\backsim &f(0)\sum_{r=1}^{\left\lfloor n\delta \right\rfloor }%
\frac{1}{d(n-r)}\int_{0\leq z\,<x\wedge (y+\Delta
)}G(r,x-dz)\int_{(y-z)^{+}}^{y+\Delta -z}V(u)du \\
&\leq &\frac{f(0)}{d(n(1-\delta ))}\int_{0\leq z\,<x\wedge (y+\Delta
)}U(x-dz)\int_{(y-z)^{+}}^{y+\Delta -z}V(u)du.
\end{eqnarray*}%
An asymptotic lower bound is given by 
\begin{equation*}
\frac{f(0)}{d(n)}\sum_{r=1}^{\left\lfloor n\delta \right\rfloor }\int_{0\leq
z\,<x\wedge (y+\Delta )}G(r,x-dz)\int_{(y-z)^{+}}^{y+\Delta -z}V(u)du,
\end{equation*}
and it is easy to see that%
\begin{equation*}
\sum_{r>n\delta }\int_{0\leq z\,<x\wedge (y+\Delta
)}G(r,x-dz)\int_{(y-z)^{+}}^{y+\Delta -z}V(u)du=o(U(x)V_{\Delta }(y)),
\end{equation*}%
where we have put $V_{\Delta }(y):=\int_{y}^{y+\Delta }V(u)du.$ Noting that $%
U(x-dz)=\sum_{1}^{\infty }G(r,x-dz)$ for $0\leq z<x,$ this leads to a
similar uniform asymptotic lower bound, and hence that%
\begin{equation}
\lim_{n,\delta }\frac{d(n)P_{1}}{f(0)\int_{0\leq z\,<x\wedge (y+\Delta
)}U(x-dz)\int_{(y-z)^{+}}^{y+\Delta -z}V(u)du}=1.  \label{6.5}
\end{equation}

Dealing with $\tilde{P}_{3}$ is more complicated. First we write%
\begin{eqnarray*}
\tilde{P}_{3} &=&\sum_{r=0}^{\left\lfloor n\delta \right\rfloor }\int_{0\leq
z\,<x\wedge (y+\Delta )}G(n-r,x-dz)G^{-}(r,[(y-z)^{+},y+\Delta -z)) \\
&=&\sum_{r=0}^{\left\lfloor n\delta \right\rfloor }\int_{x\wedge (y+\Delta
)-x<w\leq x}G(n-r,dw)G^{-}(r,[(y-x+w)^{+},y+\Delta -x+w)).
\end{eqnarray*}%
We approximate this below and above by breaking the range of integration
into subintervals of length $\varepsilon \ll \Delta ,$ then use the estimate 
$G(n-r,[k\varepsilon ,(k+1)\varepsilon ))\backsim f(0)\int_{k\varepsilon
}^{(k+1)\varepsilon }U(v)dv/d(n-r),$ and finally let $\varepsilon
\rightarrow 0$ to conclude that%
\begin{equation}
\lim_{n,\delta }\frac{d(n)P_{3}}{f(0)\int_{0\leq z\,<x\wedge (y+\Delta
)}U(x-z)dz\int_{y\vee z\leq w<y+\Delta }V(du-z)}=1.  \label{6.6}
\end{equation}%
(Note that the term corresponding to $r=n$ in (\ref{6.11}) is included here.)

Also, for any fixed $\delta \in (0,1/2)$ we can use (\ref{6.2}) twice to see
that%
\begin{eqnarray}
\tilde{P}_{2} &\backsim &f(0)\sum_{\left\lfloor n\delta \right\rfloor
<r<\left\lfloor n(1-\delta )\right\rfloor }\frac{1}{d(n-r)}\int_{0\leq
z\,<x\wedge (y+\Delta )}G(r,x-dz)\int_{(y-z)^{+}}^{y+\Delta -z}V(u)du  \notag
\\
&\leq &\frac{CV_{\Delta }(y)}{d(n)}\sum_{\left\lfloor n\delta \right\rfloor
<r<\left\lfloor n(1-\delta )\right\rfloor }\int_{0\leq z\,<x}G(r,x-dz) 
\notag \\
&\leq &\frac{C}{d(n)}\sum_{\left\lfloor n\delta \right\rfloor
<r<\left\lfloor n(1-\delta )\right\rfloor }\sum_{m=0}^{\left\lfloor
x\right\rfloor }G(r,[m,m+1)  \notag \\
&\backsim &\frac{CV_{\Delta }(y)}{d(n)}\sum_{\left\lfloor n\delta
\right\rfloor <r<\left\lfloor n(1-\delta )\right\rfloor
}\sum_{m=0}^{\left\lfloor x\right\rfloor }\frac{\int_{m}^{m+1}U(v)dv}{d(r)} 
\notag \\
&\leq &\frac{Cx}{c_{n}}\cdot \frac{V_{\Delta }(y)U(x+1)}{d(n)}=o(\frac{%
V_{\Delta }(y)U(x)}{d(n)}).  \label{6.7}
\end{eqnarray}

After reading off the asymptotic behaviour of the first term in (\ref{6.11})
from (\ref{6.2}), the proof is now completed by using (\ref{6.5}), (\ref{6.6}%
), (\ref{6.7}), and the following result.
\end{proof}

\begin{lemma}
For $x,y\geq 0$ and $\Delta >0$ the following identity holds%
\begin{eqnarray}
\int_{0\leq z\,<x\wedge (y+\Delta )}\int_{y\vee z\leq w<y+\Delta
}U(x-z)dzV(dw-z) &+&U(x-dz)V(w-z)dw  \notag \\
+\boldsymbol{1}_{\{x\leq y+\Delta \}}\int_{(y-x)^{+}}^{y+\Delta -x}V(w)dw
&=&U(x)V_{\Delta }(y).  \label{6.12}
\end{eqnarray}
\end{lemma}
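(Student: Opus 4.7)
The plan is to derive \eqref{6.12} via Lebesgue-Stieltjes integration by parts applied to the product $\phi(z) := U(x-z)\,V(w-z)$ on $z \in [0, x\wedge w]$ (for each fixed $w$), followed by integration over $w \in [y, y+\Delta]$ and Fubini. For fixed $w$, IBP gives
\begin{equation*}
\phi(0) - \phi(x\wedge w) = \int_{[0, x\wedge w]} U(x-z)\,V(w-dz) + \int_{[0, x\wedge w]} V(w-z)\,U(x-dz),
\end{equation*}
where $U(x-dz)$ and $V(w-dz)$ denote the positive pushforward measures in $z$ obtained from $U(du)$ and $V(du)$ under $u \mapsto x-u$ and $u \mapsto w-u$ respectively. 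One identifies $\phi(0) = U(x)V(w)$, and, using the facts that $V(\{0\}) = 0$ in the non-lattice case and $U(\{0\}) = 1$ (since $H_0 = 0$), that $\phi(x\wedge w) = \mathbf{1}_{\{x \leq w\}} V(w-x)$.

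Integrating the IBP identity in $w$ over $[y, y+\Delta]$ yields, on the left,
\begin{equation*}
U(x)V_\Delta(y) - \mathbf{1}_{\{x \leq y+\Delta\}}\int_{(y-x)^+}^{y+\Delta-x} V(w')\,dw',
\end{equation*}
after the substitution $w' = w - x$; the subtracted term is precisely the third summand on the LHS of \eqref{6.12}. On the right I would invoke Fubini. The region $\{(z,w) : y \leq w < y+\Delta,\ 0 \leq z < x\wedge w\}$ agrees, up to a null set, with the region $\{(z,w) : 0 \leq z < x\wedge(y+\Delta),\ y\vee z \leq w < y+\Delta\}$ appearing in \eqref{6.12}. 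After swapping the order of integration, the second IBP integral becomes the summand $U(x-dz)\,V(w-z)\,dw$. For the first IBP integral, the key identification is that under the area-preserving bijection $u = w-z$ the product measure $V(w-dz)\otimes dw$ on $(z,w)$ coincides with $dz \otimes V(du)$, i.e., $dz \otimes V(dw-z)$ in the paper's notation; this produces the summand $U(x-z)\,dz\,V(dw-z)$. Combining the pieces recovers \eqref{6.12}.

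The main technical subtlety is bookkeeping: one must correctly account for the unit atom $U(\{0\}) = 1$ appearing at the upper IBP endpoint $z = x$ (and the corresponding right/left-continuity conventions for $z \mapsto U(x-z)$), use $V(\{0\}) = 0$ to eliminate the boundary contribution when $w < x$, and verify that the two mixed-measure notations $V(w-dz)\otimes dw$ and $dz \otimes V(dw-z)$ really define the same measure on $(z,w)$ via the reflection $u = w-z$. Once these are in place, the remainder is routine Fubini and change-of-variables.
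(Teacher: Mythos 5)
Your plan (integrate by parts in $z$ for each fixed $w$, then integrate in $w$ and Fubini back to the paper's double integral) is sound in outline, and the Fubini/change-of-variable step identifying $V(w-dz)\otimes dw$ with $dz\otimes V(dw-z)$ is correct. But the boundary bookkeeping, which is the entire content of the lemma, rests on the claim ``$V(\{0\})=0$ in the non-lattice case,'' and this is false. Here $V$ is the renewal function of the weak descending ladder height process with $H_0^-=0$, so its renewal measure carries an atom of size at least $1$ at the origin, exactly as $U$ does; there is no asymmetry. Consequently $\phi(x\wedge w)=U(x-w)V(0)$ equals $U(x-w)$, not $0$, whenever $w<x$, and the displayed value of $\int_y^{y+\Delta}\phi(x\wedge w)\,dw$ acquires an extra term $\int_y^{(y+\Delta)\wedge x}U(x-w)\,dw$ that has no counterpart in (\ref{6.12}). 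What you in fact need is $V(0-)=0$ (trivially true), and to ensure this is the limit that appears you must carry out the Stieltjes IBP with a consistent caglad convention — e.g.\ taking $\phi(z)=U(x-z)\,V\bigl((w-z)-\bigr)$ — so that the upper boundary value is $U(x-w)V(0-)=0$ when $w<x$; without fixing this, the stated IBP identity $\phi(0)-\phi(x\wedge w)=\int U(x-z)V(w-dz)+\int V(w-z)U(x-dz)$ is simply not an identity (for $w<x$ its right side computes to $U(x)V(w)$ and its left side to $U(x)V(w)-U(x-w)V(0)$, which differ).

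The paper sidesteps precisely this nuisance by reversing your order of operations: it first integrates the inner variable $w$ over $[y,y+\Delta)$, producing the function $V_\Delta(y-z)=\int_{y-z}^{y+\Delta-z}V(u)\,du$, which is absolutely continuous in $z$ and has no atoms; the resulting $z$-integrand is then literally $-\frac{d}{dz}\bigl[U(x-z)V_\Delta(y-z)\bigr]$ and the lemma follows from the fundamental theorem of calculus, with a straightforward split into the cases $y\ge x$ and $y<x$. If you keep your order (IBP first, then integrate in $w$), the argument can be repaired, but you must (a) replace $V(\{0\})=0$ by $V(0-)=0$ and track the one-sided limit into the boundary term, and (b) be wary of the ``up to a null set'' justification for the region swap, since the relevant measures in $z$ ($U(x-dz)$ and $V(w-dz)$) do have atoms and Lebesgue-null sets in the $(z,w)$ plane need not be null for them.
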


\begin{proof}
Assume first that $y\geq x,$ so that $x\wedge (y+\Delta )=x,$ and the first
integral reduces to%
\begin{eqnarray*}
&&\int_{0\leq z\,<x}\int_{y\leq w<y+\Delta }U(x-z)dzV(dw-z)+U(x-dz)V(w-z)dw
\\
&=&\int_{0\leq z\,<x}U(x-z)[V((y-z+\Delta )-)-V((y-z)-)]dz+U(x-dz)V_{\Delta
}(y-z) \\
&=&\int_{0\leq z\,<x}-\frac{d}{dz}[U(x-z)V_{\Delta }(y-z)]du=U(x)V_{\Delta
}(y)-U(0)V_{\Delta }(y-x).
\end{eqnarray*}%
This verifies (\ref{6.12}), since $U(0)=1$ and the second term on the LHS of
(\ref{6.12}) is $V_{\Delta }(y-x)$ when $y\geq x.$ If $y<x$ we split the
first integral into two parts and repeat the above calculation to see that 
\begin{eqnarray}
&&\int_{0\leq z\,<y}\int_{y\leq w<y+\Delta }U(x-z)dzV(dw-z)+U(x-dz)V(w-z)dw 
\notag \\
&=&U(x)V_{\Delta }(y)-U(x-y)V_{\Delta }(0).  \label{6.13}
\end{eqnarray}%
The second part is, writing $\overline{V}(z)=\int_{0}^{z}V(w)dw,$ 
\begin{eqnarray}
&&\int_{y\leq z\,<x\wedge (y+\Delta )}\int_{z\leq w<y+\Delta
}U(x-z)dzV(dw-z)+U(x-dz)V(w-z)dw  \notag \\
&=&\int_{y\leq z\,<x\wedge (y+\Delta )}U(x-z)V((y+\Delta -z)-)dz+U(x-dz)%
\overline{V}(y+\Delta -z)  \notag \\
&=&\int_{y\leq z\,<x\wedge (y+\Delta )}-\frac{d}{dz}[U(x-z)\overline{V}%
(y+\Delta -z)]  \notag \\
&=&U(x-y)V_{\Delta }(0)-U(x-(x\wedge (y+\Delta ))\overline{V}(y+\Delta
-(x\wedge (y+\Delta ))  \notag \\
&=&U(x-y)V_{\Delta }(0)-\overline{V}(y+\Delta -x)\boldsymbol{1}_{\{y+\Delta
>x\}}.  \label{6.14}
\end{eqnarray}%
Since the second term in (\ref{6.12}) reduces to $\overline{V}(y+\Delta -x)%
\boldsymbol{1}_{\{y+\Delta >x\}}$ when $y<x,$ the proof in this case follows
from (\ref{6.13}) and (\ref{6.14}).
\end{proof}

\begin{remark}
The recent paper \cite{abkv} contains some functional limit theorems for
conditional random walks in the domain of attraction of a one-sided stable
law.
\end{remark}

\end{document}